\crefname{equation}{}{} 
\DeclareSIUnit\parsec{pc}
\newcommand{\bi}{\begin{itemize}}
\newcommand{\ei}{\end{itemize}}
\newcommand{\ben}{\begin{enumerate}}
\newcommand{\een}{\end{enumerate}}
\newcommand{\be}{\begin{equation}}
\newcommand{\ee}{\end{equation}}
\newcommand{\bea}{\begin{eqnarray}} 
\newcommand{\eea}{\end{eqnarray}}
\newcommand{\ba}{\begin{align}} 
\newcommand{\ea}{\end{align}}
\newcommand{\bse}{\begin{subequations}} 
\newcommand{\ese}{\end{subequations}}
\newcommand{\bc}{\begin{center}}
\newcommand{\ec}{\end{center}}
\newcommand{\bfi}{\begin{figure}}
\newcommand{\efi}{\end{figure}}
\newcommand{\bmp}[1]{\begin{minipage}{#1}}
\newcommand{\emp}{\end{minipage}}
\newcommand{\bp}{\begin{proof}}
\newcommand{\ep}{\end{proof}}
\newcommand{\ie}{{\it i.e.\ }}
\newcommand{\eg}{{\it e.g.\ }}
\newcommand{\etc}{{\it etc.\ }}
\newcommand{\mbf}[1]{{\mathbf #1}}
\newcommand{\C}{\mathbb{C}}
\newcommand{\R}{\mathbb{R}}
\renewcommand{\d}{\mathrm{d}}
\newcommand{\bigO}{{\mathcal O}}
\newcommand{\veps}{\varepsilon}
\DeclareMathOperator{\im}{Im}
\DeclareMathOperator{\re}{Re}
\newtheorem{thm}{Theorem}
\newtheorem{cor}[thm]{Corollary}
\newtheorem{pro}[thm]{Proposition}
\newtheorem{rmk}[thm]{Remark}
\newtheorem{defn}[thm]{Definition}
\newcommand{\om}{\omega}
\newcommand{\g}{\gamma}
\newcommand{\te}{\tilde\eta}
\begin{document}

\title{An adaptive spectral method for oscillatory second-order linear ODEs with frequency-independent cost }

\author[1,*]{Fruzsina J.\ Agocs}
\author[1,**]{Alex H.\ Barnett}
    \affil[1]{Center for Computational Mathematics, Flatiron Institute, 162 Fifth Avenue, New York, 10010 NY, USA}
    \affil[*]{fagocs@flatironinstitute.org}
    \affil[**]{abarnett@flatironinstitute.org}

\date{}
\maketitle

\begin{abstract}

We introduce an efficient numerical method for second order linear ODEs whose
solution may vary between highly oscillatory and slowly changing over the solution interval.
In oscillatory regions the solution is generated via a nonoscillatory phase
function that obeys the nonlinear Riccati equation.
We propose a
defect-correction iteration that gives an asymptotic series for such a phase
function; this is numerically approximated on a Chebyshev grid with a small number of nodes.
For analytic coefficients we prove that each iteration, up to a certain
maximum number,
reduces the residual by a factor of order of the local frequency. 
The algorithm adapts both the step size and the choice of method,
switching to a conventional spectral collocation method
away from oscillatory regions.
In numerical experiments we find that our proposal outperforms other state-of-the-art
oscillatory solvers, most significantly at low-to-intermediate
frequencies and at low tolerances, where it may use up to $10^6$
times fewer function evaluations.  Even in high frequency regimes, our
implementation is
on average $10$ times faster than other specialized solvers.
\end{abstract}

\section{Introduction}\label{introduction}

The efficient numerical solution of highly oscillatory ordinary differential
equations (ODEs) has long been a challenge.
A wide range of specialized methods exists for dealing with ODEs
with various structures;
see \cite{petzold1997,engquist2009} for thorough reviews.
In this work we handle a commonly occurring
form, the variable-coefficient, linear, second
order ODE with
smooth coefficients.
This includes simple harmonic oscillators with time-dependent coefficients,
which arise in many computational physics problems.
One application in which an extremely fast numerical solution is needed
is Bayesian parameter estimation in cosmology.
Specifically, each spatial Fourier mode of the density perturbations
obeys such an ODE in time,
with solutions that may switch repeatedly between smooth and
highly oscillatory characters \cite{agocs2020dense}.
Of order $10^3$ modes are needed, and this must be repeated for
up to $10^6$ trial parameter choices; thus the ODE solver
may be called a billion times, and an efficient frequency-independent
solver is crucial.
Besides inflationary cosmology \cite{Hergt2022,martin2003,winitzki2005}, this class of ODE appears in
Hamiltonian dynamics \cite{Pritula2018,fiore2022}, particle accelerators and plasma physics
\cite{courant1958,davidson2001,hazeltine2003,lewis1968}, electric circuits
\cite{likharev2022}, satellite systems \cite{saxena2020}, acoustics and gravity waves \cite{filippi1998,einaudi1970}, and quantum mechanics \cite{griffiths2018,adhikari1988,arnold2011wkb,cea1982}.
In many of these fields our proposed method could supply a robust and efficient
numerical replacement for
uncontrolled asymptotic approximations.

The ODE under study has the form
\be
u''(t) + 2\g(t) u'(t) + \om^2(t)u(t) = 0, \quad t \in (t_0,t_1) \subset \R,
\label{ode}
\ee
where $\g$ is a given smooth (by which we mean $C^\infty$) damping function,
and $\om$ is a given smooth, but possibly very large, real-valued
local frequency function\footnote{If coefficients are merely piecewise smooth, a solution may simply be patched together at the breakpoints, thus we do not discuss this situation further.}.
We solve the initial value problem (IVP)
\begin{align} u(t_0) &= u_0, \label{ic0} \\
    u'(t_0) &= u'_0, \label{ic1}
\end{align}
although we note that the methods that we present could easily be applied to
two-point boundary value problems.
The restriction to real $\om$ is not crucial,
but avoids complications with exponentially large dynamic ranges
(e.g., deeply evanescent regions)
that we leave for future study.

In regions where $\om \gg 1$,
the solution $u$ is oscillatory:
loosely speaking,
it has local approximate form $a e^{i\om_0t} + b e^{-i\om_0t}$, where $\om_0$
is a local value of $\om(t)$.
Conventional ODE integrators then require discretization with several
grid points per local period $2\pi/\om_0$, forcing the overall
cost to be $\bigO(\om_0)$, which can be prohibitively slow.
This has led to the development of solvers
that exploit new representations or asymptotic expansions of the solution, allowing many periods between discretization nodes.
We now review such prior work,
while noting that, to our knowledge, our proposal is the first
tackle the problem adaptively with spectral accuracy
when the solution has both highly oscillatory and smooth regimes.

The most common asymptotic expansion
is the Wentzel--Kramers--Brillouin (WKB), a
perturbation method applied to linear differential equations that contain a
small parameter
in the highest derivative
\cite{benderorszag,logan}.
This approximation arose in quantum mechanics where it is still widely
applied for approximate analytic solutions of the Schr\"{o}dinger equation.
Writing $\om(t) = \om_0\Omega(t)$,
where $\Omega(t)$ is of unit size, and the asymptotic parameter is
$\om_0 \gg 1$,    \cref{ode} becomes
\be\label{odewkb}
u'' + 2\gamma(t) u' + \om_0^2 \Omega(t)^2 u = 0.
\ee
Substituting $u(t) = e^{\om_0 z(t)}$ and defining $z'(t) = x(t)$
gives the Riccati equation (a nonlinear 1st-order ODE) for $x$, 
\be\label{riccwkb}
x' + 2\gamma(t) x + \om_0 x^2 + \om_0 \Omega(t)^2 = 0.
\ee
Making a regular perturbation expansion in $1/\om_0$,
\be\label{wkbexpansion}
x(t) = \sum_{j=0}^{N} \om_0^{-j} x_j(t),
\ee
and matching powers of $\om_0$ in \cref{odewkb} gives the recursion relation
for the functions $x_j(t)$
(where for simplicity we give only the $\gamma \equiv 0$ case),
\begin{equation}\label{wkbrecur}
  x_0 = \pm i\Omega, \quad x_j = -\frac{1}{2x_0}\left( x'_{j-1} + \sum_{k = 1}^{j-1}x_k x_{j-k} \right), \quad \text{for } j = 1, 2, \ldots.
\end{equation}
Although originally an analytic tool,
WKB has recently been exploited in numerical solvers.
These algorithms achieve
$\bigO(1)$ (\ie $\om_0$-independent) runtime in regions where the asymptotic
series is a sufficiently good approximation.
WKB forms the basis of the method described in
\cite{agocs2020efficient,agocs2020dense}, and associated open-source software
package \texttt{oscode} \cite{agocs2020joss}. This uses the expansion
\cref{wkbexpansion} up to and including $x_3$ within a time-step,
or, if this is not accurate enough, switches to a 4,5th
order Runge--Kutta pair.
In \cite{arnold2011wkb,korner2022wkb} the related
``WKB marching method'' uses the same
expansion truncated after $x_2$ to transform \cref{odewkb} into a smoother
problem in the highly oscillatory regions,
otherwise again switching to a 4,5th order Runge--Kutta method.
In terms of the stepsize $h$, neither method is formally high-order.
\texttt{oscode} has
$\bigO(h)$ global error, although
its prefactor is small when $\om_0$ is large.
The
WKB-marching method's oscillatory solver improves on this and has $\bigO(h^2)$ global error, resulting in better performance at lower tolerances.

Yet, as a consequence of low $h$ order and fixed asymptotic order,
a small requested tolerance in either method forces
$h \lesssim 1/\om_0$, removing the efficiency advantage over conventional timesteppers.
They are therefore only efficient if no more than about 6 digits of accuracy
are required, and can suffer at low-to-intermediate frequencies where more asymptotic terms would be required.
The present method, on the other hand, is both spectrally accurate and 
adaptive in the number of terms in its asymptotic expansion.

The ``quasilinearization method''
(QLM) \cite{bellman1970} instead successively approximates
a solution of the Riccati equation \cref{riccwkb}
by a functional Newton iteration.
The resulting differential equation recurrence (here stated for $\gamma\equiv 0$),
\be
x_0 = \pm i\om, \quad
x_j' - x_{j-1}^2 + 2x_j x_{j-1} + \om^2 = 0, \quad \text{for } j = 1, 2, \ldots
\label{qlmode}
\ee
turns out to yield a solution equivalent to a WKB expansion with $2^j$ terms.
This recurrence is either solved symbolically \cite{mandelzweig2004,krivec2006}
or numerically \cite{krivec2008,krivec2014}. The former
requires increasing amounts of algebra with iteration number, so can
become cumbersome. Integrating each ODE in \cref{qlmode} numerically does not prove to be a better strategy since each such ODE is itself oscillatory.

Recently Bremer \cite{bremer2018}, following \cite{heitman2015,bremer2016},
has proposed an efficient
method for solving \cref{ode}, in the special case $\g \equiv 0$,
over a domain in which the solution is highly oscillatory.
The method aims to find a global nonoscillatory phase function $\alpha(t)$
satisfying Kummer's equation,
\be
\frac{3}{4} \left(\frac{\alpha''}{\alpha'}\right)^2 - \frac{1}{2}\frac{\alpha'''}{\alpha'} - (\alpha')^2 +
\om(t)^2 = 0,
\label{kummer}
\ee
a nonlinear and generally oscillatory ODE, and will therefore be referred to as the Kummer's phase function method hereafter.
One then reconstructs solutions
via $u(t) = \sin(\alpha(t) + c)/\sqrt{\alpha'(t)}$.
The generated phase functions are nonoscillatory in the sense that they can be
accurately approximated on intervals of fixed size
using polynomials of degree independent of $\om_0$.
The results of \cite{heitman2015,bremer2016}
prove the existence of such nonoscillatory phase functions when
$\Omega(t)$ is
nonoscillatory in the sense that $\log \Omega$ is smooth and its Fourier transform rapidly decaying.
Finding initial conditions on the phase function leading to
a nonoscillatory solution is, however, challenging.
Bremer proposed a ``windowing'' method to numerically find such a global phase function on the interval $(t_0,t_1)$,
first using a $C^\infty$ partition of unity near $t_1$ to smoothly ``blend'' $\Omega$ to
a constant function, integrating backwards from this constant region
down to $t = t_0$, and finally integrating $t \in (t_0,t_1)$ forward using the original $\Omega$.
This exploits two facts: a nonoscillatory phase function for
the constant case $\Omega(t) \equiv 1$ is simple to write down, and
a smooth window allows the nonlinear Kummer oscillator to make a smooth 
(``adiabatic'')
transition which excites only an exponentially small oscillation amplitude.
The parameters of this partition of unity require problem-dependent
adjustment.
Nevertheless Bremer's is the most efficient
and accurate prior method known to the authors\footnote{During the preparation
of this manuscript, a new and more general phase function-based method able to
tackle zeros in $\om$ has been proposed by Bremer
\cite{bremer2022turningp}. No comparison to this novel solver is made in this
work due to its current lack of a publicly available implementation.}.

Here we present an approach using an asymptotic expansion for the derivative of
a nonoscillatory Riccati phase function, meaning that we construct an expansion
for $x(t) = (\log u(t))'$, the phase function being $z(t) = \log u(t) =
\int^t x(\sigma)\d \sigma$.
An asymptotic sequence of functions approximating $x(t)$
is constructed
via a defect
correction scheme \cite{bohmer1984},
where each iteration involves simply taking a derivative
(performed numerically on a spectral grid), rather than solving
an ODE as in QLM.
We adaptively terminate this (non-convergent) iteration before it becomes unstable.
Our algorithm is capable of handling a non-zero damping term $\g$,
allowing for the direct solution of a wider range of second order linear ODEs
than the WKB-marching or the Kummer's phase function method.
Our iteration does not introduce oscillatory contributions,
hence finds a nonoscillatory approximate phase function without
reference to initial conditions, in contrast to the
Kummer's phase function method. 
Another crucial difference from the latter is that $x(t)$ is solved for
locally on one interval with an adaptively-chosen length, rather than globally.
Solutions to \cref{ode}--\cref{ic1} are then constructed by time-stepping over the
integration range, repeatedly matching Cauchy data at interval endpoints.
Our iteration only involves one previous term (a 2-term recurrence),
so is simpler than the traditional WKB expansion
\cref{wkbrecur} requiring all past terms.

Phase function methods, as with WKB or QLM methods,
is naturally only applicable in the high frequency regime.
Thus, a significant part of this work is to generalize
ideas from \cite{agocs2020efficient} to
create a fully adaptive algorithm that can switch 
between oscillatory and nonoscillatory methods,
and choose the time-step $h$ to achieve a user-requested
tolerance efficiently.
Our nonoscillatory method is a standard adaptive spectral collocation scheme.

This paper is structured as follows. \cref{methods} describes the various
numerical methods used in the proposed algorithm including the asymptotic
expansion for oscillatory domains, the spectral method based on Chebyshev
nodes, adaptive control of stepsize, and switching between the two alternative
schemes.
\cref{errorana} presents rigorous bounds
(given analytic coefficients)
on the Riccati residual
under our defect correction scheme,
including a bound $\bigO(e^{-c\om_0})$ given sufficient terms.
We show results from numerical experiments in
\cref{numresults}, which involves a comparison with state-of-the-art ODE solvers
mentioned above. \cref{conclusions} concludes with a summary and
suggestions for future work.

\section{Methods \label{methods}}

\subsection{Defect correction of the Riccati phase function}
  \label{phasefun}

In this section we explain how to solve for
a nonoscillatory Riccati phase function on an interval,
which will correspond to a single time-step of the overall ODE solver.
This interval has to be sufficiently small so that changes in the coefficients $\om(t)$,
$\g(t)$ are small, yet it has to contain a large number of oscillations of $u(t)$, so that $\om(t)$ is sufficiently large.
Writing $u = e^z$ where $x(t) = z'(t)$ is the derivative of the phase function $z(t)$,
any nonvanishing $u(t)$ satisfies \cref{ode} if and only if $x(t)$ satisfies the nonlinear Riccati equation
\be
x' + x^2 + 2\g(t)x + \om(t)^2 = 0.
\label{ricc}
\ee
(For convenience we now work without rescaling by $\om_0$ as in \cref{introduction}.)
A generic solution to \cref{ricc} is oscillatory
on the local timescale $1/\om(t)$.
This is illustrated by the case of constant $\om\equiv \om_0$
and $\g\equiv 0$,
which has only the family of analytic solutions
$x(t) = \om_0 \tan(\alpha - \om_0t)$ parameterized by $\alpha\in\C$,
where $\re \alpha$ is interpreted as a phase shift and $\im \alpha$
controls the amplitude. Their oscillation frequency is $2\om_0$.
However, taking the limit $\im \alpha \to \pm \infty$ produces
the only two nonoscillatory (constant) solutions $x(t) = \pm i\om_0$.
For general analytic frequency functions $\om(t)$ there
also exist nonoscillatory solutions.
These take the approximate form
$x_{\pm}(t) \approx \pm i\om(t)$, leading to a conjugate pair of basis
functions $u_{\pm}(t) := \exp(\int^t x_\pm(\sigma) \d \sigma)$.
The rigorous existence of nonoscillatory solutions
follows from the corresponding result
for Kummer's equation \cref{kummer} proven in
\cite{heitman2015,bremer2016}.
The argument that existence of a real-valued Kummer's phase function implies
existence of a Riccati phase function goes as follows:\footnote{We thank Jim Bremer for explaining this argument to us.}
if $\alpha(t)$ satisfies \cref{kummer},
then defining $\beta' = -\alpha''/2\alpha'$ allows \cref{kummer}
to be written
$\beta'' - (\alpha')^2 + (\beta')^2 + \om(t)^2 = 0$, and these last two equations
are the real and imaginary parts of \cref{ricc}
with $x = i\alpha' + \beta'$.

Given any trial solution $x$ to \cref{ricc}, we define its \emph{residual}
as the left-hand-side of \cref{ricc}, namely
\be
\label{R}
R[x](t) := x' + x^2 + 2\g(t)x + \om(t)^2.
\ee
We propose the following iteration
to generate a sequence of functions $x_0, x_1, \dots, x_k$
on a given interval,
using the above positive (say) approximate form as a starting point:
\begin{align}
x_0(t) &= +i\om(t), \label{init} \\
x_{j+1}(t) &= x_j(t) - \frac{R[x_j](t)}{2 \left( x_j(t) + \g(t) \right)}, \qquad j=0,1,\dots,k-1. \label{iter}
\end{align}
This is an \emph{approximate} Newton iteration for \cref{ricc} (note that an
exact Newton iteration has been used numerically \cite{krivec2008,krivec2014}
and as an analysis tool \cite{heitman2015}). To show this, perturbing $x_j$ by
a function $\delta_j$ we get
\begin{align}
R[x_j + \delta_j] &= x_j' + \delta_j' + x_j^2 + 2 x_j \delta_j + \delta_j^2 
+ 2\g x_j + 2\g\delta_j + \om(t)^2 \nonumber \\
&= R[x_j] + \delta_j' + 2x_j\delta_j + 2\g\delta_j + \bigO(\delta_j^2), \nonumber
\end{align}
then linearization yields a linear first order ODE for $\delta_j$, 
$\delta_j' + 2x_j(t) \delta_j + 2\g \delta_j  = -R[x_j](t)$.
However, this ODE is itself generally oscillatory,
with an unknown initial condition needed to achieve a nonoscillatory
solution.
Yet if $\delta_j$ is nonoscillatory, the first term $\delta_j'$ is
a factor $x_j = \bigO(\om)$ smaller than the second term, thus
one might hope that by dropping it a useful reduction in residual might
still result.
This leads to the algebraic formula $2\left(x_j(t) + \g(t)\right) \delta_j(t) = -R[x_j](t)$
which is solved pointwise for each $t$. Updating via
$x_{j+1} = x_j + \delta_j$ explains our proposed \cref{iter}.
This is a type of defect correction scheme \cite{bohmer1984}.
If $\om$ is a smooth function, then
the result is nonoscillatory by construction, without explicit reference
to an initial condition.

The early iterates of \cref{init}--\cref{iter} illustrate
the residual reduction (we give the case $\g(t) \equiv 0$, for
simplicity, but the results hold for the general case):
\begin{align}
x_0 &=  i\om, &&R[x_0] = i\om' = \bigO(\om), \nonumber \\
x_1 &= i\om - \frac{\om'}{2\om}, &&R[x_1] = -\frac{\om''}{2\om} +
    \frac{3\om'^2}{4\om^2} = \bigO(1), \label{x1R1} \\
x_2 &= i\om - \frac{\om'}{2\om} + \delta_1, \text{ where }
    \delta_1 = \frac{\frac{\om''}{2\om} - \frac{3\om'^2}{4\om^2}}{2i\om - \frac{\om'}{\om}}, \quad
    &&R[x_2] = \delta_1' + \delta_1^2 = \bigO(\om^{-1}). \nonumber
\end{align}
Here our somewhat loose asymptotic notation $\bigO(\om)$, \etc 
has meaning over some interval where $\om$ does not change much,
\ie that its upper and lower bounds are of the same order.
Also, since $\om$ is assumed smooth, we have that $\om'$, $\om''$, \etc are
of the same order as $\om$.
If we were to include $\g$, we would assume $\g = \bigO(1)$
and that its derivatives are of the same order as $\g$ itself due to smoothness.
This suggests that each iteration reduces the residual by a factor $\bigO(\om)$
but brings in one extra derivative of $\om$ (and $\g$).
To verify this pattern for arbitrary $j$, notice that
$$
R[x_{j+1}] = R[x_j] + 2\left(x_j + \g \right)\delta_j  + \delta_j^2 + \delta_j' = \delta_j^2 + \delta_j',
$$
where the first two terms canceled by design, due to \cref{iter}.
Substituting $\delta_j = -R[x_j]/2(x_j + \g)$ and using
the quotient rule then proves the following useful update
for the residual function.
\begin{pro}\label{PRiter}  
  Let $x_j$, $\om$ and $\gamma$ be smooth functions on an interval
  on which $x_j+\gamma$ is nonvanishing.
  Let $x_{j+1}$ be given by the iteration \cref{iter}.
  Then the associated residual function \cref{R} obeys the iteration
  \be
  \label{Riter}
    R[x_{j+1}] = \frac{1}{2(x_j + \g)}\left( \frac{(x_j + \g)'}{x_j + \g} R[x_j] - R[x_j]' \right) 
    + \left(\frac{R[x_j]}{2(x_j + \g)}\right)^2. 
  \ee
\end{pro}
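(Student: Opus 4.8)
The plan is to verify \cref{Riter} by direct substitution, exploiting the fact that the residual map \cref{R} is a quadratic polynomial in its argument, so the expansion of $R$ about a trial solution terminates \emph{exactly} after the quadratic term — no linearization, no remainder. Write $\delta_j := x_{j+1}-x_j = -R[x_j]/\bigl(2(x_j+\g)\bigr)$; this is well-defined and smooth on the interval precisely because $x_j+\g$ is assumed nonvanishing there and $x_j,\om,\g$ (hence $R[x_j]$) are smooth, which is what licenses the differentiation below.

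First I would expand $R[x_j+\delta_j]$ straight from the definition \cref{R}, collecting powers of $\delta_j$, to get the exact identity $R[x_j+\delta_j] = R[x_j] + \delta_j' + 2(x_j+\g)\delta_j + \delta_j^2$. This is the same computation already displayed in the text just before the statement, but kept exact rather than linearized. The defining relation $2(x_j+\g)\delta_j = -R[x_j]$ then makes the middle two terms cancel against the first, leaving the clean identity $R[x_{j+1}] = \delta_j' + \delta_j^2$, which is also noted in the paragraph preceding the statement.

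Second I would substitute the explicit form of $\delta_j$ back into these two surviving terms. The square term immediately gives $\delta_j^2 = \bigl(R[x_j]/2(x_j+\g)\bigr)^2$, the last term of \cref{Riter}. For $\delta_j'$ I would apply the quotient rule to $R[x_j]/(x_j+\g)$, obtaining $\delta_j' = -R[x_j]'/\bigl(2(x_j+\g)\bigr) + R[x_j](x_j+\g)'/\bigl(2(x_j+\g)^2\bigr)$; pulling the common factor $1/\bigl(2(x_j+\g)\bigr)$ out front reproduces the bracketed first term of \cref{Riter}. Adding the two contributions yields \cref{Riter} exactly.

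I do not expect any genuine obstacle here: the argument is a short closed-form computation. The only points demanding care are (i) not linearizing, \ie retaining the $\delta_j^2$ term, since it is not negligible in this exact identity, and (ii) the bookkeeping of signs and of the factor $2$ when applying the quotient rule. The nonvanishing hypothesis on $x_j+\g$ enters only to ensure that $\delta_j$, and hence $x_{j+1}$, is smooth on the whole interval, so that the differentiation producing $\delta_j'$ is valid throughout.
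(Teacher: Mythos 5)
Your proof is correct and follows exactly the same route as the paper: expand $R[x_j+\delta_j]$ exactly (the quadratic map has no higher-order remainder), cancel $R[x_j]+2(x_j+\g)\delta_j$ by the definition of $\delta_j$, and then substitute and apply the quotient rule to the surviving $\delta_j'+\delta_j^2$. Nothing is missing; the paper's exposition is just more compressed, displaying only the cancellation step and deferring the quotient-rule substitution to a single sentence.
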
                  
Thus, if one can show that $|x_j+\gamma|$ is uniformly bounded from
below by an $\bigO(\om)$ constant,
and that the final quadratic term in \cref{Riter} is small,
then one might hope that the
residual is an asymptotic series
shrinking like $R[x_j] = \bigO(\om^{1-j})$ for $j=0,1,\dots$.
However, it will turn out that this series, and the iteration \cref{iter} for $x_j$, are not convergent:
for a typical fixed functions $\om(t)$ and $\g(t)$,
the increasing order of derivatives eventually lead to divergence.
Nevertheless, the iteration
is useful numerically, since when $\om$ varies only a small amount
over an interval,
the residual decays geometrically for the first few iterations with a rate that improves with larger $\om$.
We will show this rigorously for the case of analytic $\om$ and $\g$
in \cref{TR}.

\subsection{Discretization of Riccati defect correction}
\label{discr}

We now turn to the numerical discretization of
the iteration \cref{init}--\cref{iter} on an interval
$[t_i,t_i+h]$ corresponding to the $i$th time-step of the solver.
For now, assume that $h$ is sufficiently small that the functions
$\om$, $\g$, $x_j$, $R_j$, and their first few derivatives are accurately
represented by function values on a single $(n+1)$-node Chebyshev grid.
We will explain our choice of $h$ in \cref{hosc}.
The remaining tasks are numerical differentiation of the function
$x_j$ needed to evaluate $R_j$ in \cref{R}, and
numerical quadrature (since $u(t) = \exp \int^t x(\sigma) \mathrm{d}\sigma$).

Recall that on the standard interval $[-1, 1]$ the $n+1$ Chebyshev nodes are
$\tilde\tau_l = \cos\left( l\pi/n\right)$, $l = 0, 1, \dots, n$.
In our experiments we fixed $n=16$, which we found gave a good compromise
between very high order and speed of numerical linear algebra.
The standard $(n+1) \times (n+1)$ differentiation matrix $\tilde{D}_n$
is filled according to \cite[Ch.~6]{tref}.
Rescaling to the time-step interval $[t_i, t_i+h]$
gives nodes
\be\label{scaledt}
\tau_l = t_i + \frac{h}{2}[1+\cos (l\pi/n)], \quad l = 0, 1, \ldots, n,
\ee
and the rescaled matrix $D_n = (2/h) \tilde D_n$.
Then given a vector $\mbf{x}\in\C^{n+1}$ with components
${x_l := \{x(\tau_l)\}}$ values of a smooth function $x$ at the nodes,
$D_n\mbf{x}$ is a vector of approximate derivatives at these nodes.
We then fill the vectors of values $\om_l=\om(\tau_j)$
and $\g_l=\gamma(\tau_j)$.
The discretization of the iteration \cref{init}--\cref{iter}
gives the sequence $\mbf{x}^{(0)}, \mbf{x}^{(1)},\dots,\mbf{x}^{(k)}$ whose
components $l=0,\dots, n$ obey
$$
x^{(0)}_l = i\om_l, \quad
x^{(j+1)}_l = x^{(j)}_l - \frac{R^{(j)}_l}{2(x^{(j)}_l + \g_l)}, \text{ where }
R^{(j)}_l = \sum_{m=0}^n (D_n)_{lm}x^{(j)}_m  + [x^{(j)}_l]^2 + 2\g_lx^{(j)}_l + \om_l^2 .
$$
We postpone choice of the stopping point $k$ to \cref{steptype}.

Finally, we must discretize the antiderivative operation giving
$z(t)$ on the interval such that $z' = x$, to then recover a solution $u = e^z$.
If dense output is required, \ie the solver is
to perform interpolation and compute the numerical solution between
internal time nodes, then this is done by constructing the integration matrix
$Q_n$ mapping function values at $n$ Chebyshev nodes to values of the
antiderivative of the interpolating polynomial at those points (with the last value
being zero). We use \texttt{chebfun}'s \cite{chebfunguide,chebfunquad}
implementation which constructs $Q_n$ as $Q_n = T_n B_n T_n^{-1}$, where $T_n$
and $T_n^{-1}$ convert Chebyshev interpolation polynomial coefficients to
node values and vice versa, and $B_n$ is the appropriate Vandermonde matrix. $T_n$
($T_{n-1}$) is equivalent to the (inverse) discrete cosine transform
\cite{gentlemanclenshawI,gentlemanclenshawII}. The matrix $Q_n$ is computed once and for all at
$\bigO(n\log n)$ cost, then at each timestep $Q_n \mbf{x}$ is performed,
followed by a rescaling by $h/2$.
If, on the other hand, the solution is required only at the end of the interval, $t = t_1$, then
$z$ need not be evaluated at the Chebyshev nodes and it suffices to compute the
definite integral $\int_{x_i}^{x_i+h}x\mathrm{d}t$ at each timestep. This is
done by Clenshaw--Curtis quadrature, following \cite[Chapter 12]{tref}.

\subsection{Numerical demonstration of Riccati defect correction}
\label{demo}

At this point it is useful to illustrate the residual reduction
with an example.
In the left panel of \cref{residual-plot}, we apply
the above numerically-discretized iteration to the
equation
\begin{equation}\label{bursteq}
  u'' + \om^2(t) u = 0, \quad  \om(t)=\frac{\sqrt{m^2 - 1}}{1+t^2},
\end{equation}
over a single timestep interval $t \in [0, 0.5]$ covered by a $n=16$ Chebyshev grid.
This choice of $\om(t)$ is analytic in the neighborhood of the interval,
having poles at $\pm i$.
For various fixed choices of $m$, which controls
the maximum frequency according to $\om_{\text{max}} := \sqrt{m^2-1}$,
we plot the maximum magnitude residual vector $\max_{0\le l \le n} |R_l^{(j)}|$
at the nodes, as a function of iteration number $j$.
We observe that the residual
indeed drops geometrically for the first few iterations,
with a rate well described by $(\om_{\text{max}})^{-j}$.
Thus, at very large $\om_{\text{max}} \approx 10^4$ the
convergence is extremely rapid, with machine precision reached at $j=5$.
However, for a small $\om_{\text{max}}=10$, the iteration
suddenly starts diverging beyond $6$
iterations, so that only about $5$ digits of accuracy can be achieved
by optimal stopping.
While divergence is to be expected since the series generated
by the iteration is asymptotic rather than convergent,
one would expect (as in \cref{errorana})
the divergent iteration to be $\bigO(\om_{\text{max}})$.
Yet, curiously,
for other intermediate $\om_{\text{max}}$ values
a sudden deterioration of rate (``kink'') occurs
also at the \emph{same} $j=6$.
This suggests that the rate change comes from the same mechanism,
one unrelated to asymptotic divergence.

We now discuss the cause of this kink phenomenon.
With asymptotics ruled out,
one might well then suspect discretization error:
the action of the
matrix $D_n$ does not give the true derivative at the nodes,
and this compounds with each iteration.
However, the right panel of \cref{residual-plot} provides strong
evidence for a third explanation: the amplification of rounding errors.
The case of constant $\om(t)$ provides a convenient diagnostic:
since its Riccati residual is zero in exact arithmetic
(both in the continuous and discrete cases),
only rounding error remains.
This error model is shown by the new dotted curves in the plot:
they clearly show that the kink occurs when rounding error dominates.
Here we picked the value of the constant to be the minimum of $\om(t)$ in
\cref{bursteq} over the interval $t \in [0, 0.5]$,
since, for fixed $n$ and $h$, smaller values of constant
$\om$ result in larger discretization errors after $j > 1$ iterations.
We suspect that the constancy with $\om_{\text{max}}$
of the kink location $j$ is due to the difference in rounding error
slope vs residual reduction slope (on the semi-log plot) being
a constant controlled only by the largest eigenvalue of $D_n$.
We find that the position of the kink is determined by a number of parameters,
including: the ratio of the distance to the nearest pole of $\om$ and the
stepsize $h$; the number of nodes $n$; and the magnitude of $\om$. Detailed
investigations
are deferred to future work.
\begin{rmk}\label{fewer}  
  In \cref{pracres} we will show that, in exact arithmetic,
  even though the number of iterations $k$
  required to reach minimal residual grows as $\bigO(\om)$, in
  practice a larger $\om$ requires \emph{fewer} iterations to reach a fixed
  accuracy threshold $\varepsilon$.
  Indeed, solving $\varepsilon\approx \om^{-k}$ gives a heuristic
  $k_\varepsilon \approx \log(1/\varepsilon)/\log \om$.
\end{rmk}

\begin{figure*}[tb]  
    \flushleft
    \subfloat{\includegraphics{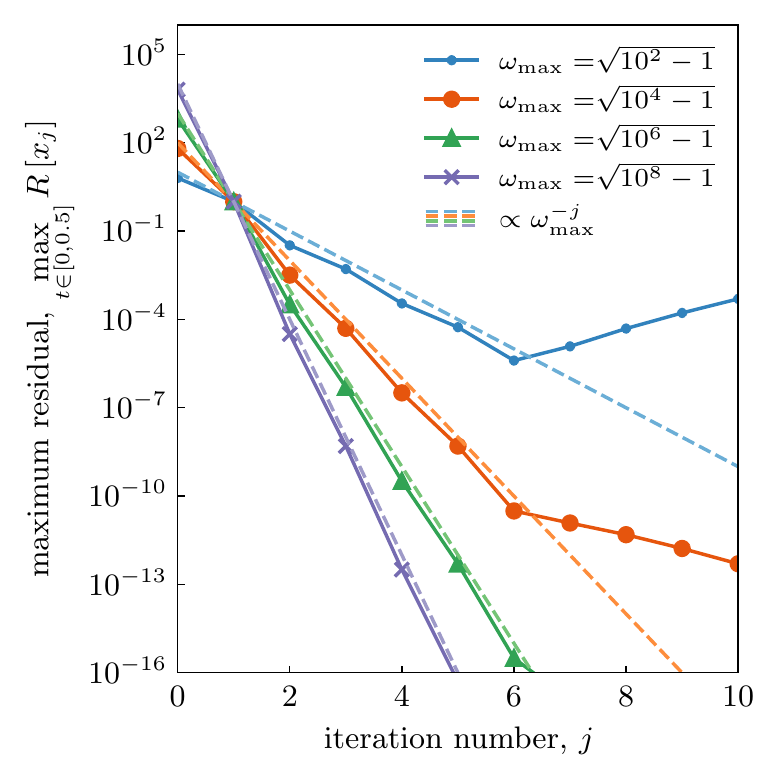}}
\subfloat{\includegraphics{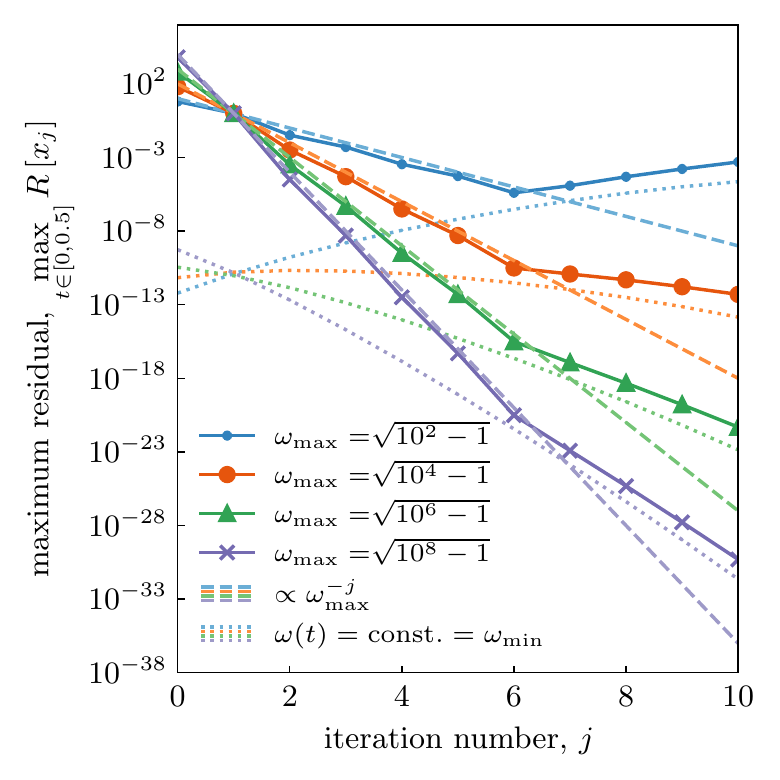}}
    \caption{\label{residual-plot}
      Demonstrations of Riccati residual defect correction.
      In detail, the left plot shows the maximum magnitude of the residual
      of the Riccati ODE \cref{R} over the
      $n=16$ Chebyshev grid, as a function of iteration count,
      for the example in \cref{demo}.
      For small $\om_{\text{max}}$ an initial geometric decay eventually becomes
      unstable growth for large $j$. The dotted lines show $(\om_{\text{max}})^{-j}$.
      The right plot contains the same information as the left plus
      a set of dotted lines showing our model of roundoff error amplification in
      the discretized Riccati iteration, computed by setting $\om$ to be
      constant (see text).
The ``kink'' at $j=6$ is explained by this error
      becoming dominant.
}
\end{figure*}

\subsection{Matching Riccati timestep intervals}

So far we have explained how to numerically solve for 
one approximate nonoscillatory Riccati phase function over one timestep
$[t_i,t_i+h]$,
in the sense of having uniformly small residual on its grid nodes.
However, this cannot match general initial conditions
$u(t_i)$ and $u'(t_i)$, as needed for a second order ODE timestepper.
Yet, the asymptotic iteration \cref{iter} can be
initialized with $x_0(t) = +i\om(t)$ (as done in \cref{init}), or with $x_0(t)
= -i\om(t)$. The two resulting Riccati solutions, $x_{\pm}(t)$,
give two linearly independent approximate solutions of the desired ODE
\cref{ode}, namely
\be
u_{\pm}(t) = e^{\int_{t_i}^t x_{\pm}(\sigma)\d\sigma}.
\label{upm}
\ee
Since $\om$ and $\gamma$ are real, $u_{+}$ and $u_{-}$ are complex conjugates.
They may be linearly combined to match any set of complex-valued
initial conditions,
\begin{align}
    u(t) &= A_{+}u_{+}(t) + A_{-}u_{-}(t), \qquad t \in [t_i, t_i + h],\\
    u'(t) &= A_{+}u'_{+}(t) + A_{-}u'_{-}(t), \qquad t \in [t_i, t_i + h],
\end{align}
with coefficients $A_\pm$ satisfying the $2\times 2$ linear system
\be
\begin{bmatrix}
    1 & 1 \\
    x_{+}(t_i) & x_{-}(t_i)
\end{bmatrix}
\begin{bmatrix}
    A_{+} \\
    A_{-}
\end{bmatrix}
= 
\begin{bmatrix}
    u(t_i) \\
    u'(t_i)
\end{bmatrix}.
\ee
Here we used that $u_{\pm}(t_i) = 1$ and $u'_{\pm}(t_i) = x_{\pm}(t_i)$ by construction of \cref{upm}.

\subsection{Direct spectral solution on nonoscillatory intervals \label{chebysteps}}

If the solution is nonoscillatory, or the asymptotic expansion fails to
converge before the residual reaches a user-specified tolerance $\varepsilon$, the solver switches to a standard spectral collocation method on a Chebyshev grid.
This achieves an arbitrarily high order timestep in a simple manner.
(We note that in other applications a \emph{global} spectral grid is
advantageous \cite{tref}; here, since adaptive switching and efficiency are
needed,
integration is best performed over many timestep intervals with a fixed high
order grid on each.)
Using the
Chebyshev nodes $\{\tau_l\}_{l=0}^n$ and differentiation matrix $D_n$
from \cref{discr},
Eq.~\cref{ode} may then be discretized over the interval $t \in [t_i, t_i+h]$ by
the linear system
\begin{align}\label{discreteode}
  F_n \mbf{u} := \left(D_n^2 + 2\text{diag}(\{\g_l\})D_n + \text{diag}(\{\om_l^2\}) \right)\mbf{u} = \mbf{0},
\end{align}
where $\mbf{u}:=\{u(\tau_l)\}_{l=0}^n$ is the node value vector, and
diag indicates the diagonal matrix constructed from a vector.
One must also enforce initial conditions via
\be\label{discreteic}
u_n
= u(t_i), \qquad \left(D_n\mbf{u}\right)_n = u'(t_i).
\ee
Stacking these conditions as rows gives the overdetermined $(n+3) \times (n+1)$ linear system
\be\label{discreteodeic}
\renewcommand*{\arraystretch}{1.25}
\begin{bmatrix}
    F_n \\
    (D_n)_{n,:} \\
    (\mathbb{I}_n)_{n,:}
\end{bmatrix}
\mbf{u}
= 
\begin{bmatrix}
\mbf{0}_n \\
    u'(t_i) \\
    u(t_i)
\end{bmatrix},
\ee
which is solved in the least-squares sense via SVD with $\bigO(n^3)$ cost.
$u(t)$ at the interval's upper end can be read off as the first
element (since nodes $\tau_l$ are ordered backwards) of the
solution vector $\mbf{u}$, and its derivative as the first element of
$D_n\mbf{u}$.
These then serve as the initial conditions for the subsequent time-step.

We estimate the error induced over this timestep $[t_i,t_i+h]$
by comparing to the solution at the interval endpoint using a doubled value
of $n$.
If the relative difference falls below the user-specified
threshold $\varepsilon$, the step with doubled $n$ is accepted ($t_i$ is advanced by $h$).
Otherwise the timestep is rejected ($t_i$ is unchanged), and the algorithm
repeats with $h$ halved.
Computation time spent in failed steps is lost, making
it imperative that the initial stepsize estimate $h$ is chosen carefully. We
describe the procedure for choosing $h$ for both spectral and asymptotic steps
in detail in the next section.

\subsection{Adaptive selection of interval size and type}
\label{adap}

In this section we explain the adaptive stepsize and method-switching
algorithm. The left side of \cref{balls-flowchart} shows
a summary flowchart.

\subsubsection{Initial stepsize estimates}
\label{hosc}

At each timestep start $t_i$, a decision needs to be made about whether to
use the Riccati defect correction or the Chebyshev spectral method.
Both types of steps could be attempted
and the decision based upon the their estimated error (as done in \cite{agocs2020efficient}).
Instead, we find it more efficient to use local properties of $\om$ at $t_i$
to get initial stepsize estimates.
Recalling \cref{x1R1}, the first correction term is $\om'/\om$, giving the
approximate timescale
\be\label{hoscini}
h_{\text{osc}} = \frac{\om(t_i)}{\om'(t_i)}.
\ee
\cref{TR} will also support this by showing that
the ratio of bounds on $\om'$ and $\om$ appears in the rate of
convergence of successive Riccati residuals.
In practice to approximate $\om'(t_i)$, we use Chebyshev differentiation
from the grid of the previous timestep, and from the grid of an initial
stepsize at $t=t_0$, supplied by the user.

A similar estimate for the spectral method should depend on two things: how
oscillatory the solution is, and on what timescales the coefficients in \cref{ode} change, \ie their smoothness. Using the former as our criterion,
a measure of the rate of
oscillations is simply the frequency, giving the timescale
\be\label{hsloini}
h_{\text{slo}} = \frac{1}{\om(t_i)}.
\ee

These initial stepsize estimates are too crude to use directly, since
$\om$, $\om'$, or $\g$ may change significantly over such an interval.
Nonetheless they provide a useful starting point for refinement.

\subsubsection{Refining the stepsize estimates}

The discretization used in the Riccati defect correction method in \cref{discr} is certainly no more accurate than
the error in which the coefficient functions $\om$ and $\g$ are represented
on the Chebyshev grid.
Therefore we estimate their Chebyshev interpolation error as follows.
Let $\mbf{f}\in\R^{n+1}$ indicate the vector of nodes values
$\{f(\tau_l)\}$ for a function $f$ which will be either $\om$ or $\g$.
Let $f^\star(t)$ be the Lagrange polynomial interpolant through the
nodes $\{\tau_l\}_{l=0}^n$.
Let $\tau^\star_l := t_i + \tfrac{h}{2}[1 + \cos(l\pi/n + \pi/(2n)]$, $l=0,\dots,n-1$ be the set of $n$ points that lie halfway in between the $n+1$ Chebyshev nodes on the unit half-circle.
Then define the error estimate
\be
    \Delta_n[f] := \max_{l = 0, 1, \ldots, n} \biggl| \frac{f(\tau^{\star}_l) -
    f^{\star}(\tau^{\star}_l)}{f(\tau^{\star}_l)} \biggr|.
\ee
Since $\{f^{\star}(t^{\star}_l)\} = L_n \mbf{f}$ for some interpolation matrix
$L_n$ that is independent of the interval choice, in practice
$L_n$ is computed only once, \eg by solving a Vandermonde system in a backwards
stable fashion.
(Even though such a system is exponentially ill-conditioned,
the interpolation itself is very accurate up to $n \lesssim 40$
for Chebyshev nodes; see \cite[Appendix A]{helsing_close}.)

Letting $\Delta$ denote the larger of $\Delta_n[\om]$ and $\Delta_n[\g]$, we accept or update the stepsize as follows,
\be
h \leftarrow \begin{cases}
        h &\text{if } \Delta \leq \varepsilon_h, \\
        \min \left( 0.7h, 0.9 h \left( \frac{\varepsilon_h}{\Delta} \right)^{\frac{1}{n+1}} \right) &\text{otherwise},
    \end{cases}
    \ee
where $\veps_h$ is a parameter quantifying relative tolerance (distinct from $\veps$). In the above, the $1/(n+1)$ exponent is justified by noting that the
error in ($n+1$)-point Chebyshev interpolation is proportional to $h^{n+2}$, therefore if
$\Delta$ exceeds $\varepsilon_h$, using an exponent slightly larger than $1/(n+2)$ takes
the local error back down to a value smaller than $\varepsilon_h$, which is further
ensured by multiplying by the safety factor 0.9. We decrease the step to $0.7h$
if it yields a smaller stepsize to ensure quicker convergence if $\Delta$ is
only slightly larger than $\varepsilon_h$. The factors 0.7 and 0.9 were set based on
empirical tests.

The local error of Chebyshev spectral steps depends on both the stepsize and
the number of nodes. Since within a step we iterate over both of these until
convergence, we only need to ensure that over our proposed stepsize the
timescale over which $u$ changes ($\approx 1/\om$) does not increase too much. Starting from $h =
h_{\text{slo}}$, we refine the stepsize proposal according to
\begin{align}
    h \leftarrow& \begin{cases}
        h/2 &\text{if } \min\limits_{j = 0, 1, \ldots, n}\frac{1}{\om(t^{\star}_j)} < \sigma h \\
        h &\text{otherwise},
    \end{cases}
\end{align}
with $\sigma = 0.8$ chosen experimentally.

\subsubsection{Choosing the step type}
\label{steptype}

With a refined $h_{\text{osc}}$ and $h_{\text{slo}}$ in hand, we can make an informed decision about which type of step to take.
\begin{pro}\label{steptypechoose}
    Let $h_{\text{osc}}$ and $h_{\text{slo}}$ be the refined stepsize proposals
    for a step to be taken from $t = t_i$ using the asymptotic expansion and the Chebyshev
    spectral method, respectively. The algorithm chooses to attempt an
    asymptotic step of size $h_{\text{osc}}$ if and only if
$$ h_{\text{osc}} > 5h_{\text{slo}} \quad \text{and} \quad \omega(t_i) h_{\text{osc}} > 2\pi, $$
    otherwise it will attempt a spectral step of size $h_{\text{slo}}$.
\end{pro}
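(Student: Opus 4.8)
The content of this proposition is the step-type branching rule of \cref{steptype}, so the plan is to justify that its two displayed inequalities are the right order-one criteria rather than to derive them from first principles; the constants $5$ and $2\pi$ play the role of empirically calibrated thresholds, in the spirit of the factors $0.7$, $0.9$, $\sigma$ fixed experimentally in the preceding subsection. Two properties must hold at the decision point $t_i$ for an asymptotic step to be preferred: it must be \emph{valid} (the Riccati defect correction will genuinely contract the residual there), and it must be \emph{worthwhile} (it advances the integration faster, per unit of work, than a spectral step). Since both proposals $h_{\text{osc}}$ and $h_{\text{slo}}$ have already been refined, the remaining content is exactly the comparison as stated.

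For validity I would appeal to the residual analysis: \cref{PRiter}, and quantitatively \cref{TR} of \cref{errorana}, show that successive Riccati residuals shrink geometrically with a rate controlled by the size of $\om$ relative to the interval, heuristically a factor $\sim (\om h)^{-1}$ per iteration (cf.\ \cref{x1R1}). The scheme is therefore only useful when the step spans many local periods $2\pi/\om$. Imposing $\om(t_i) h_{\text{osc}} > 2\pi$ is the minimal gate ensuring at least one full oscillation across $[t_i,t_i+h_{\text{osc}}]$; below it the solution is not meaningfully oscillatory there, the nonoscillatory phase representation offers no economy over a direct Chebyshev fit of $u$, and the nominal rate $\sim(\om h_{\text{osc}})^{-1}$ per iteration need not even be below $1$. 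Hence whenever this inequality fails the asymptotic branch is disabled.

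For worth I would set up a crude cost-versus-progress comparison. Per step, both methods are dominated by dense linear algebra on a fixed $(n+1)$-node grid --- for the spectral step the $\bigO(n^3)$ SVD of \cref{discreteodeic}, roughly doubled by its $n\to 2n$ error check, and for the asymptotic step the $k$ defect-correction iterations for both $x_+$ and $x_-$ (plus lower-order antiderivative and matching work) --- but the asymptotic step covers $h_{\text{osc}}/h_{\text{slo}}$ times more of the domain. Before refinement $h_{\text{osc}}\approx \om(t_i)/\om'(t_i)$ and $h_{\text{slo}}\approx 1/\om(t_i)$, so this ratio is $\approx \om^2/\om'$, \ie large precisely where $\om$ is big and slowly varying; demanding $h_{\text{osc}}>5h_{\text{slo}}$ restricts to that regime with a safety margin, so that the extra machinery (the $\pm$ pair, the matching, the risk of a rejected step after an over-optimistic heuristic $h_{\text{osc}}$) is only incurred when the payoff is clear. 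The ``if and only if'' and the closing ``otherwise'' are then immediate: the solver has exactly two step types, and when either inequality is violated control falls through to a spectral step of size $h_{\text{slo}}$. The one point that resists a rigorous proof --- the main obstacle --- is showing $5$ (and, less critically, $2\pi$) to be \emph{optimal}; I would present it instead as the experimentally tuned value and note that the proposition only asserts that this is the rule the implementation follows.
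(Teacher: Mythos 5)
Your reading is correct: the proposition is a statement of the branching rule itself, not a theorem admitting a derivation, and the paper supplies no formal proof environment for it — only a one-sentence motivation following the proposition statement, with the constant $5$ explicitly flagged as set by numerical experiment. You identify this structure accurately, and your justification of the $\omega(t_i)h_{\text{osc}}>2\pi$ gate (at least one full oscillation across the step, so a phase-function representation has something to buy) matches what the paper leaves implicit.

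Where you diverge from the paper is in the rationale offered for requiring a margin factor ($5$) rather than mere parity ($h_{\text{osc}}>h_{\text{slo}}$). You frame it as a cost--benefit tradeoff: the asymptotic step carries overhead (the $\pm$ pair, the matching system, the risk of rejection), so one should invoke it only when it advances the integration decisively farther per unit of work. The paper's stated reason is instead about \emph{feasibility}, not economy: in the regime where the two proposed stepsizes are comparable, the Riccati defect correction is observed rarely to converge to the target residual $\varepsilon$ at all — because $\omega$ is not large enough, or $\gamma$ or $\omega'$ is too large — so attempting it merely wastes work before falling back to a spectral step. Both motivations are compatible and both point to demanding a clear separation between the two proposed stepsizes, but if you want your account to reflect what the authors actually observed, the convergence-failure argument is the operative one.
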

The reason behind requiring the proposed stepsize for an asymptotic step not to
simply be larger than that of a Chebyshev spectral step is that in regions
where the two stepsizes compete, the asymptotic method rarely converges before
the residual reaches $\varepsilon$, either because $\om$ is not large enough, or
either of $\g$ and $\om'$ is too large. The prefactor of $5$ was set based on numerical experiments. 

If an asymptotic step was decided to be attempted, the algorithm will start
iterating over $x_j$ according to \cref{iter}, while
monitoring the maximum residual over the nodes.
If $\max_{l=0,\dots,n} |R[x_j](\tau_l)| < \varepsilon$ or $\max_{l=0,\dots,n}
|R[x_j](\tau_l)| \geq \max_{l=0,\dots,n} | R[x_{j-1}](\tau_l)|$, the iteration is stopped. In the former
case, the proposed solution $u(t_i+h)$ is accepted and the independent variable
is incremented, $t := t_i + h$. Otherwise, the step is retried with size
$h = h_{\text{slo}}$ using the spectral method.

If either of the conditions in \cref{steptypechoose} were not met, a spectral
step is attempted with $h = h_{\text{slo}}$. The local error of the step is
then estimated and the number of nodes, as well as the stepsize, is
adapted until a local error value of at most $\varepsilon$ is reached, as described in
\cref{chebysteps}.

We summarize the entire adaptive switching algorithm
in the left panel of \cref{balls-flowchart}.

\section{Error analysis for the Riccati iteration\label{errorana}}

Here we prove our main rigorous result:
the defect correction of \cref{phasefun}
in the case of analytic coefficients induces
temporary geometric decrease of the Riccati residual by a factor $\bigO(\om)$ per iteration.
Indefinite geometric decrease is not in general possible because of
factorial growth due to repeated differentiation;
the iteration is asymptotic in $\om\gg 1$, but not convergent.
We abbreviate the residual function by $R_j = R_j(t) := R[x_j](t)$.
Our main tools in proving the following will be \cref{PRiter} (the iteration for $R_j$), induction, and complex analysis.

\begin{thm}\label{TR}   
  Fix $t\in\R$, and let the frequency function $\om$ and damping function $\g$ be analytic
  in the closed ball $B_\rho(t) := \{z\in\C : |z-t| \le \rho\}$,
  for some $\rho>0$, with the bounds in this ball
    \begin{alignat}{4}
        \eta_1 \leq \; &|\om(z)| && \leq \eta_2,          \label{ommag} \\
         &|\om'(z)| &&\leq \eta_3 \leq \frac{\eta_1^2}{17}, \label{omder} \\
         &|\g(z)| &&\leq \eta_4 \leq \frac{\eta_1^2}{34\eta_2}. \label{gammaupper}
  \end{alignat}
Let the nonnegative integer $k$ be small enough that
  \be
    r := \frac{1}{2\te_1 \rho} \left(1 + \frac{\te_2}{\te_1}\right) k + \frac{\te_3}{4\te_1^2} \leq \frac{3}{4},
  \label{r}
  \ee
  where
    \begin{align}
    \te_1 &= \eta_1 - \eta_4 - \frac{17 \te_3}{4 \eta_1},  \label{eta1} \\ 
    \te_2 &= \eta_2 + \eta_4 + \frac{17 \te_3}{4 \eta_1},   \label{eta2} \\
    \te_3 &= \eta_3 + 2\eta_2\eta_4. \label{eta3}
\end{align}
  Then after any number $j\le k$ of iterations of \cref{init}--\cref{iter},
  the function $x_j$ has Riccati residual \cref{R} bounded at the point $t$ by
  \be
  |R_j(t)| \le \te_3 r^j. \label{Rjbnd}
  \ee
\end{thm}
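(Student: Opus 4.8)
The plan is to prove \cref{Rjbnd} by induction on $j$, estimating all relevant quantities on a shrinking sequence of concentric balls. The key idea is that each step of the residual iteration \cref{Riter} involves one differentiation, so by the Cauchy estimate a bound on $R_j$ valid on a ball of radius $\rho_j$ yields a bound on $R_j'$ valid on the smaller ball $B_{\rho_{j+1}}(t)$, losing a factor $1/(\rho_j - \rho_{j+1})$. The natural choice is to shrink the radius linearly, $\rho_j = \rho(1 - j/k)$ or similar, so that each differentiation costs a factor $\sim k/\rho$; this is exactly where the term $\tfrac{1}{2\te_1\rho}(1+\te_2/\te_1)k$ in the definition \cref{r} of $r$ comes from. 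Before running the induction I would first establish, once and for all on the full ball $B_\rho(t)$, the auxiliary bounds on $x_j + \g$ that make \cref{Riter} usable: a lower bound $|x_j + \g| \ge \te_1$ and an upper bound $|x_j + \g| \le \te_2$, together with a bound $|(x_j+\g)'/(x_j+\g)|$, all uniform in $j \le k$. These will themselves need the inductive hypothesis that $|R_j|$ is small (it feeds back into $\delta_j = -R_j/2(x_j+\g)$ and hence into $x_{j+1}+\g = x_j + \g + \delta_j$), so the cleanest approach is a \emph{single simultaneous induction} carrying both "$|R_j| \le \te_3 r^j$ on $B_{\rho_j}(t)$" and "$\te_1 \le |x_j+\g| \le \te_2$ on $B_{\rho_j}(t)$".

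The base case $j=0$ is direct: $x_0 + \g = i\om + \g$, so $|x_0+\g| \ge \eta_1 - \eta_4$ and $\le \eta_2 + \eta_4$, which the definitions \cref{eta1}--\cref{eta2} accommodate with room to spare (the extra $17\te_3/4\eta_1$ slack is reserved for the perturbations $\delta_j$ accumulated over later steps); and $R_0 = i\om' + (i\om)^2 + 2\g i\om + \om^2 = i\om' + 2i\g\om$, so $|R_0| \le \eta_3 + 2\eta_2\eta_4 = \te_3 = \te_3 r^0$. For the inductive step, assuming the bounds at level $j$ on $B_{\rho_j}(t)$: apply a Cauchy estimate on the annulus between $\rho_j$ and $\rho_{j+1}$ to bound $R_j'$; plug into \cref{Riter} to get $|R_{j+1}| \le \tfrac{1}{2\te_1}\big(\tfrac{\te_2/\te_1 \cdot (\text{something})}{1} |R_j| + |R_j'|\big) + \big(|R_j|/2\te_1\big)^2$ on $B_{\rho_{j+1}}(t)$; collect the linear-in-$|R_j|$ terms into the factor appearing in \cref{r}, and control the quadratic term using $|R_j| \le \te_3 r^j \le \te_3$ together with the smallness conditions \cref{omder}--\cref{gammaupper} (which are precisely what make $\te_3/4\te_1^2$ — the second piece of $r$ — an honest bound on $|R_j|/4\te_1^2$). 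This yields $|R_{j+1}| \le \te_3 r^j \cdot r = \te_3 r^{j+1}$ provided $r \le 3/4$, closing that half. Then one must re-establish the bounds on $|x_{j+1}+\g|$ on $B_{\rho_{j+1}}(t)$: since $x_{j+1} + \g = (x_j+\g) + \delta_j$ with $|\delta_j| = |R_j|/2|x_j+\g| \le \te_3 r^j/2\te_1$, the total drift $\sum_{j\ge 0}|\delta_j| \le \tfrac{\te_3}{2\te_1}\sum r^j \le \tfrac{\te_3}{2\te_1}\cdot 4 = \tfrac{2\te_3}{\te_1}$; one then checks this is absorbed by the slack built into \cref{eta1}--\cref{eta2} — here the hypotheses $\eta_3 \le \eta_1^2/17$ and $\eta_4 \le \eta_1^2/(34\eta_2)$ are what guarantee $\te_1 > 0$ and that the drift does not eat through the buffer, so that $|x_{j+1}+\g|$ stays in $[\te_1,\te_2]$.

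\textbf{Main obstacle.} The delicate part is the bookkeeping of the constants: one must verify that the particular buffer $17\te_3/4\eta_1$ chosen in \cref{eta1}--\cref{eta2} is simultaneously (a) large enough to absorb the accumulated perturbation $\sum|\delta_j| \le 2\te_3/\te_1$ in the bounds on $x_j+\g$, and (b) small enough that the numerical inequalities $\eta_3 \le \eta_1^2/17$ and $\eta_4 \le \eta_1^2/(34\eta_2)$ still force $\te_1 > 0$ (indeed $\te_1 \ge \eta_1/2$ or so) and keep the quotient $\te_3/4\te_1^2$ below, say, $1/16$, leaving $r \le 3/4$ achievable. A secondary subtlety is choosing the radius schedule $\rho_j$ so that the geometric-series sums telescope cleanly: the simplest workable choice is the arithmetic one $\rho_{j} - \rho_{j+1} = \rho/k$ for $j < k$, which makes each Cauchy estimate cost exactly $k/\rho$ and reproduces the linear-in-$k$ term in \cref{r}; one should double-check there is no off-by-one issue at $j = k$. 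The complex-analytic input itself (Cauchy's integral formula for the derivative on a sub-ball) is completely standard; all the work is in the constant-chasing to show the recursion $|R_{j+1}| \le r|R_j|$ genuinely holds with the advertised $r$.
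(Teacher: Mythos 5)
Your proposal matches the paper's proof essentially step for step: the nested concentric balls $B_{\rho_j}(t)$ with $\rho_j = (1-j/k)\rho$, the Cauchy estimate costing a factor $k/\rho$ per differentiation, the simultaneous induction carrying both $|R_\ell|\le\te_3 r^\ell$ and $\te_1\le|x_\ell+\g|\le\te_2$ on $B_{\rho_j}$ for all $\ell\le j$, the telescoping-sum/geometric-series argument to keep $x_{j+1}+\g$ inside $[\te_1,\te_2]$, and the identification of the constant bookkeeping (absorbing $\sum|\delta_j|$ into the buffers $17\te_3/4\eta_1$) as the only delicate point are all precisely what the paper does. The one small cosmetic difference is that the paper parametrizes Cauchy's integral over a circle centered at $t$ and evaluates the resulting trigonometric integral exactly, whereas the equivalent standard Cauchy derivative bound on a ball of radius $\rho_j - \rho_{j+1}$ centered at $z$ (as you describe) gives the same $k/\rho$ factor.
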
  

\begin{figure*}[tb]
    \centering
    \subfloat{\begin{adjustbox}{width=0.45\textwidth, valign=c}\begin{tikzpicture}[ roundnode/.style={circle, draw=black!60, fill=green!0, very thick, minimum size=7mm}, squarednode/.style={rectangle, draw=blue!40, fill=red!0, thick, minimum size=5mm}]

\tikzstyle{every node}=[font=\scriptsize]

\node[squarednode]      (hsloini)   {$h_{\text{slo}}^{(0)} = \frac{1}{\omega(t_i)}$};
\node[squarednode]      (hoscini)  [right=0.7cm of hsloini] {$h_{\text{osc}}^{(0)} = \frac{\omega(t_i)}{\omega'(t_i)}$};
\node[squarednode]      (hslo)  [below=0.4cm of hsloini] {$h_{\text{slo}}$};   
\node[squarednode]      (hosc)  [below=0.4cm of hoscini] {$h_{\text{osc}}$};    
\node[squarednode, align=center]      (switch) [below right=0.4cm and -0.4cm of hslo] {$h_{\text{osc}} > h_{\text{slo}}$ \\ and \\$\omega(t_i) h_{\text{osc}}/(2\pi) > 1$};
\node[squarednode]      (ricc)  [below right=3.1cm and -1.6cm of hoscini] {Riccati step in $x(t)$};  
\node[squarednode]      (conv)  [below=0.4cm of ricc] {Converged?}; 
\node[squarednode, align=center]      (cheb)  [left=1cm of conv] {Chebyshev step \\ in $u(t)$}; 
\node[squarednode, align=center]      (ic)  [below=0.5cm of conv] {Compute $u$ from $x$, \\ match $u(t_i), u'(t_i)$};     
\node[squarednode, align=center]      (accept)  [below =3.8cm of switch] {Advance solution: $t_{i+1} = t_{i}+h$, \\ $u(t_{i+1}) = u(t_i + h), \quad u'(t_{i+1}) = u'(t_i + h)$ };    
\node[inner sep=0, minimum size=0, above=0.9cm of cheb] (k) {}; \node[inner sep=0, minimum size=0, below=0.4cm  of accept] (l) {}; \node[inner sep=0, minimum size=0, right=3.4cm of l] (m) {}; \node[inner sep=0, minimum size=0, above=2.5cm of switch] (o) {}; \node[inner sep=0, minimum size=0, right=3.4cm of o] (n) {}; 

\draw[->] (hsloini.south) -- (hslo.north);
\draw[->] (hoscini.south) -- (hosc.north);
\draw[->] (hosc.south) -- (switch.north);
\draw[->] (hslo.south) -- (switch.north);
\draw[->] (switch.south) -- (ricc.north) node[midway, above=-0.5ex, sloped] {\tiny True};    
\draw[-] (switch.south) -- (k) node[midway, above=-0.5ex, sloped] {\tiny False};  
\draw[->] (k) -- (cheb.north) ;
\draw[->] (conv.south) -- (ic.north) node[midway, below=-0.5ex, sloped] {\tiny True};    
\draw[->] (conv.west) -- (cheb.east) node[midway, above=-0.5ex] {\tiny False};
\draw[->] (ic.south) -- (accept.north); 
\draw[->] (cheb.south) -- (accept.north);    
\draw[->] (ricc.south) -- (conv.north);
\draw[-] (accept.south) -- (l) ;
\draw[-] (l) -- (m) ;    
\draw[-] (m) -- (n) ;    
\draw[-] (n) -- (o) ; 
\draw[->] (o) -- (hsloini.north) ;
\draw[->] (o) -- (hoscini.north) ;    

\end{tikzpicture}
 \end{adjustbox}}
    \hfill
    \subfloat{\includegraphics[width=0.5\textwidth, valign=c]{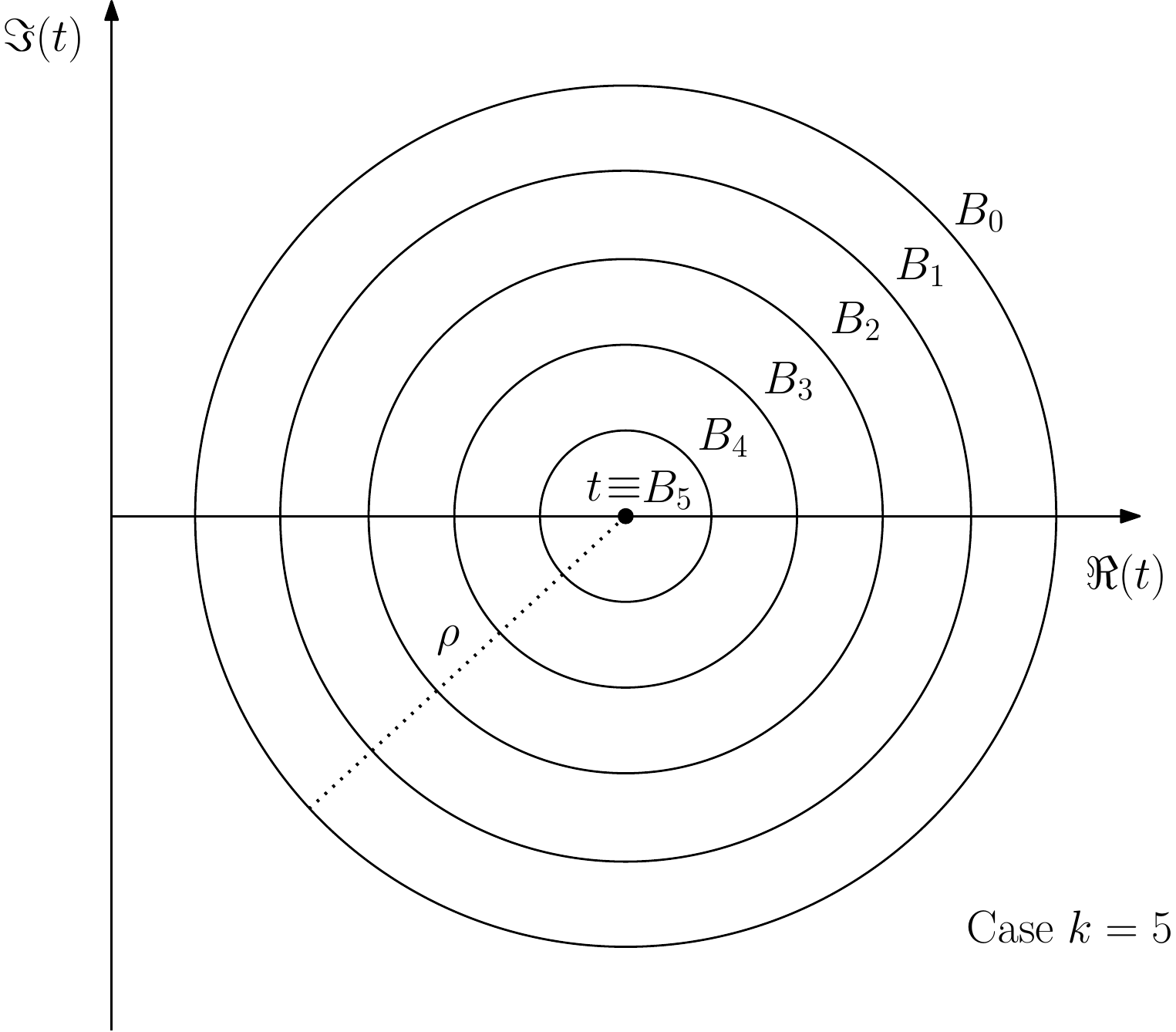}}
    \caption{\label{balls-flowchart} Left: Flowchart showing choice of stepsize $h$ and step type (Riccati vs standard Chebyshev) for our proposed algorithm.
      $t_i$ is the start of the current timestep.
    Right: Sketch of a nested equispaced set of
      balls centered on the target time $t$, used in the proof of \cref{TR}.}
\end{figure*}

This shows, for $\om'$ and $\g$ sufficiently small,
temporary geometric convergence up to $k$ iterations,
but at a rate $r$ that deteriorates with $k$.
For any $k$ to satisfy \cref{r}, $\om$ must have
a sufficiently large lower bound $\eta_1$, \ie $t$
must be in a sufficiently oscillatory region for the original ODE.
The condition \cref{omder} on $\om'$ is similar to
that in WKB: the \emph{relative $\om$ change per period} should be less than some small constant.

\begin{rmk}\label{slight}
  By construction, $[\te_1,\te_2]$ is only a limited
  expansion of the interval $[\eta_1,\eta_2]$ containing the range of $|\om|$ in the ball.
  In particular, by the inequalities \cref{ommag}--\cref{gammaupper} and
  definitions \cref{eta1}--\cref{eta3},
$\te_1 \ge 8\eta_1/17$ and $\te_2 \le \eta_2 + 9\eta_1/17$.
  Applying this we see that the last term in the definition of $r$ in \cref{r}
  is no more than $17/128 \leq 0.14$, thus can at worst cause only modest
  deterioration in the rate.
\end{rmk}

\begin{rmk}
  In the limit when $\om$ tends to be constant in the ball ($\eta_3$ small) and damping is small ($\eta_4$ small),
  then the geometric rate $r$ tends to $k/\om \rho$, where we can interpret $\om\rho$ as $2\pi$
  times the number of oscillation periods across the ball radius.
  Then, for example, with a radius of only 5 periods, $r$ satisfies \cref{r}
  for $k\le 25$.
\end{rmk}

\begin{proof}
  Define the concentric nested set of closed balls $B_j := B_{\rho_j}(t)$,
  with radii $\rho_j := (1-j/k)\rho$, $j=0,1,\dots,k$. Note that
  $B_0$ is the original ball in the statement of the theorem, while
    $B_k = \{t\}$ is the single point of interest; see the
    right panel of \cref{balls-flowchart}.
  For any function $f$ analytic in $B_j$ we abbreviate its $\infty$-norm by
  $\|f\|_j := \max_{z \in B_j}|f(z)|$.
  We will bound $f'$ in $B_{j+1}$ in terms of $\|f\|_j$ by
  applying Cauchy's theorem for derivatives
  \cite{steinshakarchi},
  \be
  f'(z) = \frac{1}{2\pi i} \int_{|\zeta-t|=\rho_j} \frac{f(\zeta)\, d\zeta}{(\zeta-z)^2}, \qquad z \in B_{j+1}.
  \label{cauder}
  \ee
  Bounding the integrand, then using the cosine rule we get
  $$
  |f'(z)| \le 
  \frac{\|f\|_j}{2\pi} \int_{|\zeta-t| = \rho_j} \frac{|d\zeta|}{|\zeta-z|^2}
  =
  \frac{\|f\|_j}{2\pi} \int_0^{2\pi} \frac{\rho_j\, d\theta}{|z|^2 + \rho_j^2 - 2|z|\rho_j \cos \theta}.
  $$
  We now use $\int_{0}^{2\pi} d\theta /(a + b \cos \theta) = 2\pi/\sqrt{a^2-b^2}$
  for $b<a$ \cite[Eq.~3.613.1]{GR8}, with
  $a = |z|^2 + \rho_j^2$ and $b = -2|z|\rho_j$, so that
  $\sqrt{a^2-b^2} = \rho_j^2-|z|^2$.
  Noting that the case $|z| = \rho_{j+1}$ bounds the others,
  and using $\rho_j=(1-j/k)\rho$, we compute, for any $0\le j < k$,
  \be
  \|f'\|_{j+1} \le
  \frac{\|f\|_j}{2\pi} \frac{2\pi\rho_j}{\rho_j^2-\rho_{j+1}^2}
  =
  \frac{\|f\|_j}{2 \rho} \frac{k(k-j)}{2k-2j-1}
  =
  \frac{\|f\|_j}{2 \rho} k \left[ \frac{1}{2} + \frac{1}{2(2k-2j-1)}\right]
  \le
  \frac{k}{\rho}\|f\|_j.
\label{derbnd}
  \ee
  Note that this bound is a factor $\bigO(k)$ better than naively
  lower bounding the denominator in \cref{cauder}.
  
We now use induction in iteration number $j$.
We take as the induction hypothesis that $x_\ell$ (and thus $R_\ell$)
  is analytic in $B_j$, for all $0\le \ell \le j$, with
  \begin{align}
      \te_1 \leq |x_\ell + \g| &\leq \te_2
  \qquad \mbox{ in } B_j, \quad \mbox{ for all } \ell = 0,1,\dots,j,
  \label{hypx} \\
      |R_\ell| &\leq \te_3 r^\ell \quad \mbox{ in } B_j, \quad \mbox{ for all } \ell = 0,1,\dots,j,
  \label{hypR}
  \end{align}
  where $r$ is defined by \cref{r}.
  Assuming for now the hypothesis for $j$, we apply simple bounds to the
  residual iteration \cref{Riter},
  lower-bounding denominator magnitudes and upper-bounding numerators
  via \cref{hypx},
  and applying \cref{derbnd} to the two derivative terms, to get
  $$
  \|R_{j+1}\|_{j+1}
  \leq
  \frac{1}{2\te_1}\left(
  \frac{1}{\te_1} \frac{k}{\rho}\te_2 + \frac{k}{\rho}
  \right)\|R_j\|_j
  +\biggl(\frac{\|R_j\|_j}{2\te_1}\biggr)^2
  \leq
  r \cdot \|R_j\|_j
  \leq
  \te_3 r^{j+1},
  $$
where in the middle step we used the crude bound $\|R_j\| \le \te_3$
following from \cref{hypR} and that $r\le1$, and then the definition of $r$ in \cref{r}.
The lower cases $\ell\le j$ follow trivially from the hypothesis since the balls
are nested.
Thus \cref{hypR} is proven for $j+1$.

It remains to verify that \cref{hypx} also holds for $j+1$.
By the functional iteration \cref{init}--\cref{iter},
$$
x_{j+1}(z) + \g(z) = i\om(z) + \g(z) - \sum_{\ell=0}^{j} \frac{R_\ell(z)}{2 \left(x_\ell(z) + \g(z)\right)},
\qquad z\in B_{j+1}.
$$
Using the hypothesis, the sum is pointwise bounded in magnitude
in $B_{j+1}$ by
$$
\left|\sum_{\ell=0}^{j} \frac{R_\ell}{2\left( x_\ell + \g \right)} \right|
\leq \frac{\te_3}{2\te_1} \cdot \sum_{\ell=0}^j r^{\ell}
\leq \frac{\te_3}{2\te_1} \cdot 4,
$$
where the upper bound in \cref{r} was used to bound the geometric series.
Using \cref{eta1} for $\te_1$ and \cref{eta3} for $\te_3$ we write this upper bound as
$$
\left|\sum_{\ell=0}^{j} \frac{R_\ell}{2\left( x_\ell + \g \right)} \right|
\leq
2\frac{\eta_3 + 2\eta_2\eta_4}{\eta_1 - \eta_4 - \frac{17\eta_3}{4\eta_1} - \frac{17\eta_2\eta_4}{2\eta_1}},
$$
then extend it by lower-bounding the denominator via \cref{omder} and the crude bound ${\eta_4 \leq \eta_1/34}$ from \cref{gammaupper}. This gives
$$
\left|\sum_{\ell=0}^{j} \frac{R_\ell}{2\left( x_\ell + \g \right)} \right| 
\leq
\frac{17\eta_3}{4\eta_1} + \frac{17\eta_2\eta_4}{2\eta_1},
$$
which, when combined with $|\om| \leq \eta_2$ and $|\g| \leq \eta_4$ yields
$$\|x_{j+1} + \g\|_{j+1} \le \eta_2 + \eta_4 + \frac{17\eta_3}{4\eta_1} + \frac{17\eta_2\eta_4}{2\eta_1} = \te_2,$$
verifying the upper bound \cref{hypx} for $j+1$.
Instead combining with $|\om| \ge \eta_1$ gives
$$ |x_{j+1}| \ge \eta_1 - \eta_4 - \frac{17\eta_3}{4\eta_1} - \frac{17\eta_2\eta_4}{2\eta_1} = \te_1 $$
in $B_{j+1}$, which verifies the lower bound \cref{hypx}.
Again, the hypothesis is inherited for $\ell\le j$ by the nesting of the balls.

Finally, the base case $j=0$ for induction
satisfies \cref{hypx}--\cref{hypR}
by the conditions of the theorem,
since $x_0(t) = i\om(t)$, noting $[\eta_1,\eta_2] \subset [\te_1,\te_2]$,
while $R_0(t) = i\left(\om'(t) + 2\g\om \right)$ is bounded in magnitude by $\te_3$.
\end{proof}

In the above theorem, the choice of $3/4$ in \cref{r} was merely a convenient one, chosen so that the bounds $[\te_1,\te_2]$ on $|x_j+\g_j|$
involving the geometric sum were
not much wider than the bounds $[\eta_1,\eta_2]$ on $|\om|$.
Note that in the case of $\gamma\equiv 0$ it is possible to
simplify the statement and proof of the theorem somewhat,
and improve some of the constants.

\subsection{Practical aspects of residual reduction \label{pracres}}

The fact that, as $k$ grows,
the rate $r$ bound in \cref{TR} deteriorates, we believe to be
an inevitable consequence of
the series generated by the iteration being asymptotic in $1/\om$ but not in general convergent.
However, given a function $\om(t)$ uniformly large enough
in a ball,
by stopping at a roughly optimal $k$ (an idea
called ``superasymptotics'' \cite{berrysuper,boydsuper})
one can achieve
exponential convergence with respect to the size of the frequency $\om$,
as follows.

\begin{cor}[Superasymptotic approximation]\label{super}
  Suppose $\om$ and $\g$ satisfy the conditions \cref{ommag}--\cref{gammaupper}
  about a point $t\in\R$,
  and let $\alpha := (1+\te_2/\te_1)/2\te_1\rho$ be the
  first term in the rate \cref{r} arising from these bounds.
  Then there is a nonnegative integer $k$ such that
  $$
  |R_k(t)|  \le  e \te_3 e^{-1/5\alpha}.
  $$
\end{cor}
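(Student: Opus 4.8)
The plan is to invoke \cref{TR} at a truncation order $k$ chosen by the standard superasymptotic recipe --- large enough to exploit the geometric decay $\te_3 r^j$, but small enough that \cref{r} (hence \cref{TR} itself) still applies. Write $c := \te_3/(4\te_1^2)$ for the constant second term of the rate in \cref{r}, so that $r = \alpha k + c$; by \cref{slight} we have $c \le 17/128$. Then for every integer $k \ge 0$ with $\alpha k + c \le \tfrac{3}{4}$, \cref{TR} gives $|R_k(t)| \le \te_3(\alpha k + c)^k$; moreover $|R_0(t)| \le \te_3$ holds with no restriction on $\alpha$ (the base case of \cref{TR}, where $R_0(t) = i(\om'(t) + 2\g\om)$). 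The task is therefore to pick a valid $k$ making $\te_3(\alpha k + c)^k \le e\,\te_3\,e^{-1/(5\alpha)}$.

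First I would dispose of the mildly oscillatory case $\alpha \ge \tfrac{1}{5}$ with the trivial choice $k = 0$: then $1/(5\alpha) \le 1$, so $e\,\te_3\,e^{-1/(5\alpha)} \ge e\,\te_3\,e^{-1} = \te_3 \ge |R_0(t)|$, as required.

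For the main case $\alpha < \tfrac{1}{5}$ I would take $k := \lceil 1/(5\alpha)\rceil \ge 1$ and set $s := \alpha k$, so that $\tfrac{1}{5} \le s < \tfrac{1}{5} + \alpha < \tfrac{2}{5}$. Since $s + c < \tfrac{2}{5} + \tfrac{17}{128} < \tfrac{3}{4}$, \cref{TR} applies and $|R_k(t)| \le \te_3(s+c)^k$. The single genuine calculation is the elementary inequality
\[
s + c \;\le\; e^{-1/(5s)}\qquad\text{for all } s\in[\tfrac{1}{5},\tfrac{2}{5}],
\]
i.e.\ $g(s) := e^{-1/(5s)} - s - c \ge 0$ on that interval; one checks this by noting that $s\mapsto e^{-1/(5s)} - s$ is unimodal (increasing then decreasing) on $[\tfrac{1}{5},\tfrac{2}{5}]$ with both endpoint values exceeding $17/128 \ge c$. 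Raising to the $k$-th power and using $k/(5s) = 1/(5\alpha)$ then yields $(s+c)^k \le e^{-1/(5\alpha)}$, hence $|R_k(t)| \le \te_3\,e^{-1/(5\alpha)} \le e\,\te_3\,e^{-1/(5\alpha)}$, which completes the proof.

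The delicate point --- and the reason the specific constant $1/5$ appears rather than the naive optimum $1/e$ --- is that the admissible truncation orders form only the coarse grid $\alpha\{0,1,2,\dots\}$, so the window in which $s=\alpha k$ can be forced to land must simultaneously (i) lie where $g\ge 0$, (ii) satisfy $r\le\tfrac{3}{4}$ so that \cref{TR} is usable, and (iii) have width at least $\alpha$ so that some $\alpha k$ is guaranteed to fall inside it. The interval $[\tfrac{1}{5},\tfrac{2}{5}]$, of width $\tfrac{1}{5} > \alpha$, is about the cleanest choice meeting all three given $c\le 17/128$; the prefactor $e$ in the statement is needed only to absorb the trivial $k=0$ regime, since the main case in fact delivers the slightly stronger bound $\te_3\,e^{-1/(5\alpha)}$.
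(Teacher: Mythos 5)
Your proof is correct, and it takes a genuinely different route from the paper's, so a brief comparison is in order. The paper picks $k$ to be the integer in $[1/5\alpha-1,\,1/5\alpha)$ (i.e.\ it rounds \emph{down}, so that $k\alpha\le 1/5$); it then observes $r = k\alpha + c \le 1/5 + 17/128 < e^{-1}$, which immediately gives $r^k \le e^{-k} \le e^{1-1/5\alpha}$ and hence the stated bound with its prefactor $e$ appearing as an intrinsic cost of $k$ possibly falling short of $1/5\alpha$ by up to one. You instead round \emph{up}, taking $k=\lceil 1/(5\alpha)\rceil$ so that $s:=k\alpha\ge 1/5$, which requires a more delicate pointwise inequality $s+c\le e^{-1/(5s)}$ on the window $[1/5,2/5]$ (verified by concavity/unimodality and endpoint checks), but in exchange collapses $k/(5s)$ exactly to $1/(5\alpha)$ and delivers the strictly stronger bound $\te_3 e^{-1/5\alpha}$ without the prefactor $e$ in the main regime. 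The tradeoff is that your argument needs the explicit case split on $\alpha\ge 1/5$ to guarantee $s<2/5$ and $k\ge 1$, whereas the paper's floor-based choice handles all $\alpha>0$ uniformly (degenerating gracefully to $k=0$); you then spend the prefactor $e$ precisely on absorbing that trivial $k=0$ case. Both proofs correctly invoke \cref{TR} with the required verification that $r\le 3/4$, and your observation about the role of the grid $\alpha\mathbb{Z}_{\ge 0}$ in forcing the constant $1/5$ is a nice piece of motivation not spelled out in the paper.
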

\begin{proof}
  Set $k$ to be the integer in the interval $[1/5\alpha-1, \, 1/5\alpha)$.
    Then since the final term in \cref{r} is bounded by $17/128 \leq 0.14$
    by \cref{slight}, and the
    first term is $k\alpha \le 1/5$, we get $r\le e^{-1}$, which
    is also $\le 3/4$ so that \cref{Rjbnd} holds.
    Choosing $j=k$, and inserting the lower bound on $k$,
    $|R_k(t)| \le \te_3 r^k \le \te_3 e^{-(1/5\alpha-1)}$, which
    finishes the proof.
  \end{proof}
Recalling that $\alpha = \bigO(1/\om)$, this
shows that an $\om$-dependent
number of iterations can give, in exact arithmetic, an
exponentially convergent pointwise residual,
$$
|R(t)| = \bigO(e^{-c\om})
$$
for some constant $c>0$.
In the limit where
$\om$ tends to constant in the ball of radius $\rho$ about $t$, then
$\alpha$ tends to $1/\om\rho$, so the above
constant is roughly $c\approx \rho/3$.
Equivalently, roughly one decimal digit is achieved per
$1.1$ periods of oscillation across the ball radius.

However, this optimal number of iterations grows as $k = \bigO(\om)$,
so for large $\om$ is impractical, and unnecessary.
In practice we use the adaptive residual stopping criterion for $k$
described in \cref{steptype}.
By returning to \cref{TR} and dropping the small second term
in \cref{r} we get, in the limit of constant $\om$ in the ball,
$$
|R_k(t)|  \approx  \te_3 \left( \frac{k}{\rho\om}\right)^k.
$$
This shows \textit{nearly} geometric convergence in $k$,
sufficient to reach machine accuracy efficiently with a few iterations
when $\om$ is large.
For large $\rho\om$ one may approximately invert this to predict the iteration number $k$ sufficient for $|R_k| \approx \varepsilon$, to get
$ k_\varepsilon \approx \frac{\log \varepsilon^{-1}}{\log \rho\om}$,
a refinement of \cref{fewer}.

Finally we show that the stopping criterion for $k$ based on the absolute
residual $R[x_k]$ of the Riccati equation is relevant because this is equal
to the \emph{relative residual} of the original ODE.
\begin{pro}[Relationship of residuals]\label{residualu}
    Let $R[x_k](t)$ be the residual of the Riccati equation as defined in
    \cref{R}.
Then if
    $\tilde{R}[u_k](t)$ is the associated residual of the original ODE
    \cref{ode}, defined as
    \be\label{Rode}
    \tilde{R}[u_k](t) := u''_k + 2\gamma(t)u'_k + \omega^2(t) u_k,
\ee
    for $u_k(t) = e^{\int_0^t x_k(\sigma)\mathrm{d}\sigma}$, then
\be
    \frac{\tilde{R}[u_k](t)}{u_k(t)} = R[x_k](t).
    \ee
\end{pro}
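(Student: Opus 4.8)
The plan is to proceed by direct differentiation, exploiting the fact that $u_k$ is an exponential and therefore nonvanishing. First I would record that $u_k(t) = e^{\int_0^t x_k(\sigma)\d\sigma}$ is well defined and smooth whenever $x_k$ is (which holds by construction of the iteration \cref{init}--\cref{iter} applied to smooth $\om$ and $\g$), and that $u_k(t)\neq 0$ for every $t$, so that the quotient appearing in the statement is meaningful.

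Next I would compute the first two derivatives of $u_k$ by the chain rule, namely $u_k' = x_k u_k$ and hence $u_k'' = (x_k' + x_k^2)\,u_k$. Substituting these into the definition \cref{Rode} of $\tilde R[u_k]$ gives
\be
\tilde R[u_k] = \bigl(x_k' + x_k^2 + 2\g x_k + \om^2\bigr)\,u_k .
\ee
The parenthesized factor is, by inspection, precisely the Riccati residual $R[x_k]$ of \cref{R}. Hence $\tilde R[u_k] = R[x_k]\,u_k$, and dividing through by the nonzero $u_k$ yields the claimed identity.

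There is essentially no obstacle in this argument; the only points worth a sentence of care are the smoothness and nonvanishing of $u_k$ (so that the derivatives exist and the division is legitimate) and the routine bookkeeping verifying that the substitution reproduces exactly the four terms of \cref{R}. I would emphasize that this identity is the conceptual justification for the whole scheme: it shows that the absolute Riccati residual equals the \emph{relative} residual of the original ODE, which is why the adaptive stopping rule of \cref{steptype}, based on monitoring $\max_{l}|R[x_k](\tau_l)|$, is controlling the quantity one actually cares about.
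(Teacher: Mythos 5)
Your argument is correct and is exactly the direct-substitution calculation that the paper's one-line proof alludes to; you have simply filled in the elementary chain-rule details ($u_k' = x_k u_k$, $u_k'' = (x_k' + x_k^2)u_k$) and the observation that $u_k$ is nonvanishing.
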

\begin{proof}
    This follows straightforwardly from substitution of the above form of $u_k$ into \cref{Rode}.
\end{proof}

\section{Numerical results \label{numresults}}

Our numerical experiments will test the integration of 2nd-order linear
ODEs that include regions with highly-oscillatory solutions.
In such regions the condition number of the problem itself becomes
large, and this limits the achievable accuracy of \emph{any}
numerical method that works with finite-precision input data.
Recall \cite[Ch.~6]{GCbook}
that the definition of relative condition number for evaluation
of a fixed function $f(t)$ is $\kappa(t) := |tf'(t)/f(t)|$.
As a toy example,
for the oscillatory function $f_{\omega_0}(t) = e^{i\om_0t}$, 
the condition number of evaluating $f_{\omega_0}(1)$ is $\kappa(1) = \om_0$,
the total phase accrued
over the interval $[0,1]$.
However, since $\om_0$ is also an input to this problem,
the relative condition number
\emph{with respect to variation in} $\om_0$ is also relevant.
The latter is
$|\om_0/f_{\om_0}(1)| \cdot |\partial f_{\om_0}(1)/\partial \om_0| = \om_0$,
in this case the same as the condition number with respect to $t$.
Thus, if $\om_0=10^{7}$, so that $\kappa=10^7$ by either definition,
then using double-precision arithmetic with
$\varepsilon_{\mathrm{mach}} \approx 10^{-16}$
no algorithm can achieve an error less than $\bigO(\kappa\cdot \varepsilon_{\mathrm{mach}}) \approx 10^{-9}$,
the error for a backward-stable algorithm.

For more general oscillatory solutions to the IVP \cref{ode}--\cref{ic1},
the two above types of condition number generally differ,
and the larger controls the minimum achievable error.
For example, scaling $\om(t)=\om_0 \Omega(t)$ as in \cref{introduction},
then taking the zero-order approximation
$u(t) = e^{\pm i \om_0 \int_{t_0}^t \Omega(\sigma) \d\sigma}$,
we see that its relative condition number with respect to
$t$ is $|t\om(t)|$, determined by the local frequency,
whereas with respect to
$\om_0$ it is $\int_{t_0}^t \om(\sigma) \d\sigma$, the total phase
accrued over $(t_0,t)$.
The loss in achievable accuracy due to the latter $\kappa$
is well known, and may be interpreted as 
inevitable accuracy loss in taking the sine or cosine of the phase function
\cite{bremer2018}.
This motivates the following.
\begin{defn}[Condition number and achievable accuracy for an oscillatory region]
  \label{kappa}
  Let $u$ be a solution to \cref{ode} on $(t_0,t_1)$ of the form
  $u(t) = e^{z(t)}$ where $\im z'(t) > 0$ for all $t\in(t_0,t_1)$.
We define the condition number of the problem of evaluation of $u(t)$
  by
  \be
  \kappa(t) \;:=\; \max\left[ \, |tz'(t)|, \, \im \, ( z(t) - z(t_0) )\, \right]
  \label{conditionnodef}
  \ee
  and then the resulting best achievable accuracy using finite precision by
  \be
  \kappa \cdot \veps_{\mathrm{mach}}.   \label{mineps}
  \ee
\end{defn}
In practice we use a numerical phase function solution to estimate $\kappa$.
In our examples the first (frequency) term does not dominate the second (accrued phase) term.
In the case where there is no such global phase function
representation $u=e^z$, 
we estimate this second term
by summing the absolute differences in $\im z$ across
the various intervals in each of which a local phase function exists.
Note that this definition does not
capture additional $\kappa$ fluctuations inherent in real-valued
functions:
taking the real part of the toy example, $f_{\om_0}(t) = \cos \om_0 t$,
$\kappa$ with respect to $t$ becomes $|t\om_0 \tan \om_0 t|$,
which fluctuates wildly from $0$ to $\infty$,
although a typical value is again $\bigO(\om_0)$.

In the below tests we denote absolute error in a numerical solution by
$\Delta u(t) := |u(t) - \tilde u(t)|$, where
$u$ is a reference solution computed either analytically
or by a converged reference solver,
and $\tilde u$ is the solution given by the method under test.

All computations were performed on an Intel Xeon Gold 6244,
16-core, 3.6 GHz workstation with 264 GB of RAM. No attempt has been made to
parallelize our code, and for the sake of fair comparison all experiments were
run with a single thread (\texttt{OMP\_NUM\_THREADS=1}).
The open-source software implementing the proposed method and the above
experiments in \texttt{Python} is available on GitHub\footnote{\url{https://github.com/fruzsinaagocs/riccati}}.
The code relies on the \texttt{numpy} \cite{numpy2020} and \texttt{scipy} \cite{scipy2020}
packages to perform linear algebra. For the numerical experiments, the
following versions of software were used: \texttt{Python} 3.8.12,
\texttt{numpy} 1.23.1, \texttt{scipy} 1.9.3.

\subsection{The Airy equation}\label{airy-demo}

A simple but effective test for highly oscillatory solvers is the Airy ODE,
\be
u'' + tu = 0, \quad t \in [1, 10^8],
\label{airy}
\ee
with initial conditions
\be
u(1) = \text{Ai}(-1) + i\text{Bi}(-1), \quad u'(1) = - \text{Ai}'(-1) -i\text{Bi}'(-1).
\label{airyic}
\ee
where Ai, Bi are the Airy functions \cite[Chapter~9]{DLMFairy}.
Its unique solution is
\be
u(t) = \text{Ai}(-t) + i\text{Bi}(-t).
\ee
This is challenging
for standard numerical methods, due to the growing frequency $\om(t) = \sqrt{t}$,
but is expected
to be easy for specialized oscillatory methods since the change in $\om(t)$ per wavelength becomes arbitrarily small, making an asymptotic approximation (such as WKB) 
an increasingly good local approximation.

Choosing a tolerance $\veps=10^{-12}$,
\cref{airy-results} shows the
internal steps our numerical solver takes while solving
\cref{airy}--\cref{airyic},
color-coded by step type, as well as the progression of stepsizes,
the numerical relative error function $\Delta u(t)/u(t)$,
and number of oscillation periods traversed in a single step,
$n_{\text{osc}}$.
The
stepsize $h$ is expected to reflect the timescale over which $\om$ changes,
therefore $h \propto \om(t)/\om'(t) \propto t$, which is observed.
The numerical
error traces the theoretical minimum, $\kappa \cdot \varepsilon_{\text{mach}}$, to
within a digit. The number of oscillations traversed per step can be derived
from $h(t)$: $n_{\text{osc}} \propto \om h \propto t^{3/2}$,
a result clearly supported by the bottom-right figure panel.
Our solver covers around $10^{11}$ periods using $\approx 30$ timesteps,
taking $\approx 10$ milliseconds of CPU time,
while achieving an error around $10^{-11}$ (or, when larger, the theoretical
minimum \cref{mineps}).

\cref{convergence-airy} demonstrates the convergence
of our method on the Airy equation for different solution interval lengths,
achieved by varying $t_1$ with $t_0=1$ fixed.
This shows that the achieved relative accuracy is approximately bounded
(within close to one digit of accuracy)
by the requested tolerance $\varepsilon$,
down to the theoretical minimum error \cref{mineps}.

\begin{figure}[tb]
    \centering
    \includegraphics{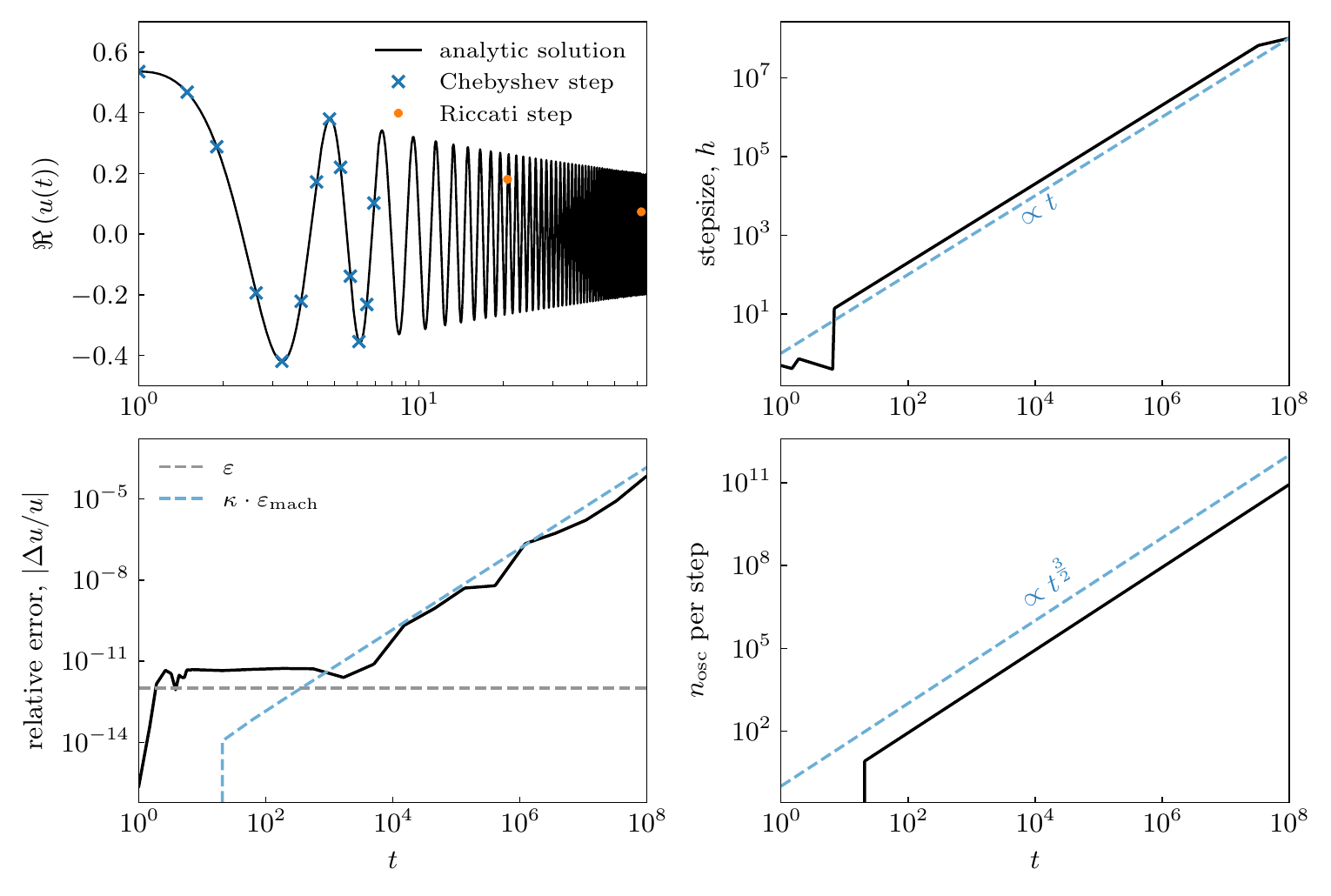}
    \caption{\label{airy-results} Numerical solution of the Airy equation. The
    top left panel shows the real part of the analytic solution in black, on
    top of which the individual timesteps of the solver are plotted, colored
    by their step type. The top right panel plots the stepsize as a function of
    time, which exhibits linear growth. The bottom left shows the relative
    error achieved by the solver in black, with two dashed lines to guide the
    eye: the chosen solver tolerance $\veps$ in gray, and the
    best achievable accuracy \cref{mineps}
    in blue. On the
    bottom right, we show the number of wavelengths of oscillation traversed in
    a single step, as a function of time. 
}
\end{figure}

\begin{figure}[tb]
    \centering
    \includegraphics{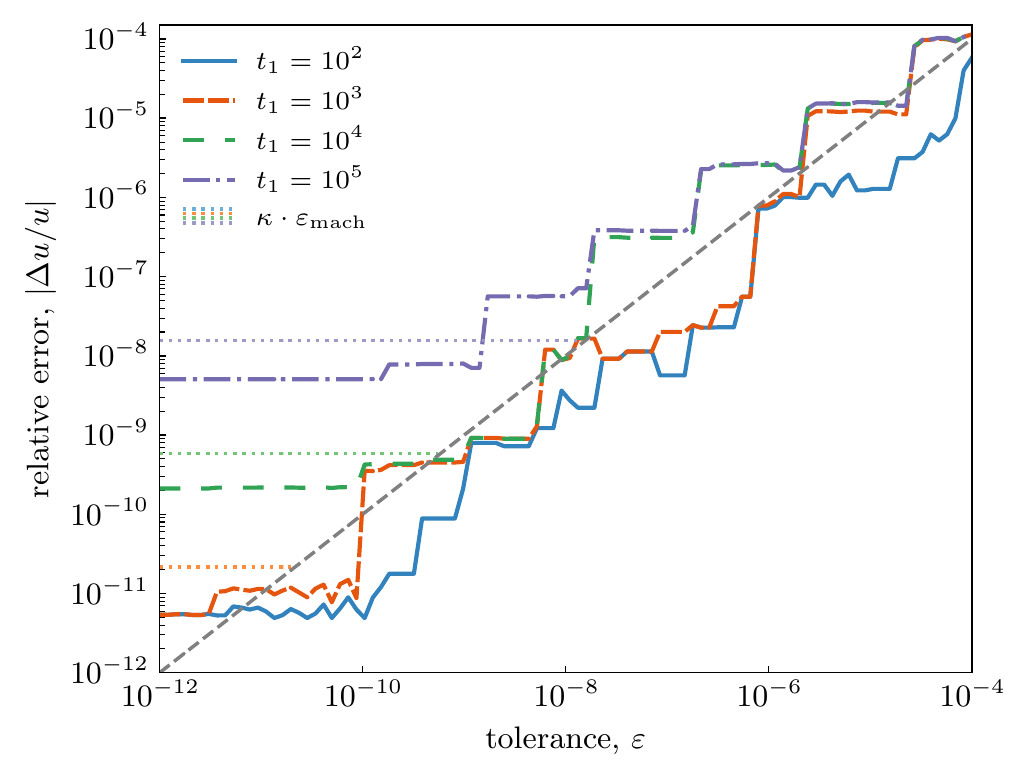}
    \caption{\label{convergence-airy} Error performance of the solver when
    applied to the Airy equation, \cref{airy}--\cref{airyic}. The plot shows
    the achieved relative error of the solver vs the requested tolerance
    for the example in \cref{airy-demo}, while the upper end of the integration
    interval $t_1$ is varied. The dotted lines show the theoretical minimum
    \cref{mineps}
    for each interval, which the solver indeed matches within a small constant.
}
\end{figure}

\subsection{Comparison with standard and state-of-the-art oscillatory solvers \label{solvercomp}}

Here we compare the performance of the present solver, which
we now refer to as adaptive Riccati defect correction (ARDC), to that of the
following recent specialized methods for oscillatory ODEs:
\begin{description}
\item[Kummer's phase function method:]{ an arbitrarily high-order solver
        described by Bremer \cite{bremer2018}, specifically created for highly
        oscillatory problems with neither a friction term nor
        transitions to nonoscillatory behavior. The user provides $\om$ as a function.
We use the \texttt{Fortran 90} implementation provided by its author
        at
        \url{https://github.com/JamesCBremerJr/Phase-functions}
}
\item[oscode:]   {an adaptive solver which switches from low-order WKB (in oscillatory regions) to Runge--Kutta (in smooth regions)
        \cite{agocs2020efficient,agocs2020dense}, that can accommodate a
        friction term. It requires
        only requires the user to specify $\om$ and $\g$, which can be given as
        either closed-form functions or as timeseries. The solver has both a
        \texttt{Python} and \texttt{C++} interface, but for the experiments below it is called in
        \texttt{Python}.}
\item[The WKB marching method] {\cite{arnold2011wkb,korner2022wkb}:
  a solver using
        the mechanism of \cite{agocs2020efficient} to switch between
        WKB-inspired and Runge--Kutta
        methods. Its convergence order in the oscillatory regions is one higher
        than that of \texttt{oscode}, but does not (easily) generalize to
        include a $\g$ term. It requires the user to provide
        the derivative functions of $\om$ up to $\om^{(5)}$.
It is implemented in \texttt{MATLAB}.
    }
\end{description}

As the test problem we take the ODE in \cite[Eq.~(237)]{bremer2018},
\be\label{bremer237eq}
u'' + \om^2(t) u = 0, \qquad \mbox{ where} \;\; \om^2(t) = \lambda^2(1 - t^2\cos3t)
\ee
on $t \in [-1, 1]$, subject to the initial conditions
\be\label{bremer237ic}
u(-1) = 0, \quad u'(-1) = \lambda,
\ee
where we vary the frequency scaling $\lambda$ between $10$ and $10^7$.

For a reference solver we 
use the Kummer's phase function method with $\veps = 10^{-14}$ (its default
tolerance), since it is arbitrarily high
order, and exhibited around $13$ accurate digits
in its published tests at $\lambda = 10$ \cite[Table~1]{bremer2018}.
As expected,
the ill-conditioning of the problem as $\lambda$
grows
reduces its accuracy, giving only $8$ digits by $\lambda = 10^7$.
This is in accordance with the theoretical minimum error \cref{mineps},
thus there will be no meaning given to reported errors below these.
Note that conventional solvers are ruled out as a reference since
the number of oscillations we test are simply too large.

\cref{bremer237tab} presents the runtime statistics of ARDC, Kummer's phase
function method, the WKB marching method, and \texttt{oscode} respectively. 
For all solvers, we set the relative tolerance to $\veps=10^{-12}$, and
in all cases work in double precision. For our solver, we set the number of
Chebyshev nodes in Riccati (oscillatory) steps (see \cref{chebysteps}) to $n = 40$, the
tolerance for stepsize selection to $\varepsilon_h = 10^{-13}$, and the number
of Chebyshev nodes in the spectral (nonoscillatory) steps to the default
$n = 16$. For all other methods, none of the input parameters were modified from their default values
with the exception of the local tolerance.

We now describe the quantities tabulated in \cref{bremer237tab}:
\begin{description}
    \item[$\bm{\mathrm{max}|\Delta u/u|}$:]{Maximum relative error over the
        integration range, evaluated at the timesteps taken internally by the
        solver. If less than the error of the reference Kummer method
    as reported in \cite[Table~1]{bremer2018}, the latter is stated as its upper bound.}
    \item[$\bm{t_{\mathrm{solve}}}$:]{Total runtime, in seconds, of a single
        ODE solve, averaged over between $10^3$ and $10^5$ runs. This is equivalent to the
        ``phase function construction time'' of
        \cite[Table~1]{bremer2018}.}

    \item[$\bm{n_{\mathrm{s,osc}}}$:]{Number of time steps of oscillatory type.
        When this
        quantity appears as $(n_1, n_2)$, $n_1$ denotes the number of attempted
        steps of the given type, out of which $n_2$ have been successful
        (accepted). The same convention applies to $n_{\text{s,slo}}$ and
        $n_{\text{s,tot}}$.}
    \item[$\bm{n_{\mathrm{s,slo}}}$:]{Number of time steps of ``standard''
      (Runge--Kutta or Chebyshev spectral) type, \ie in the nonoscillatory regions of the solution.}
    \item[$\bm{n_{\mathrm{s,tot}}}$:]{Total number of steps performed by the
        solver, sum of $n_{\mathrm{s,osc}}$ and $n_{\mathrm{s,slo}}$.}
    \item[$\bm{n_f}$:]{Total number of function evaluations during a single ODE solve.}
    \item[$\bm{n_{\mathrm{LS}}}$:]{Total number of linear solves
        performed by ARDC.}
\end{description}

\begin{table*}[tb]
    \renewcommand{\arraystretch}{1.2}
    \resizebox{\textwidth}{!}{\begin{tabular}{l c l c c c c c c}
\hline \hline 
 method &  $\lambda$  &  $\max|\Delta u/u|$  &  $t_{\mathrm{solve}}$/\si{\s} &  $n_{\mathrm{s,osc}}$  &  $n_{\mathrm{s,slo}}$  &  $n_{\mathrm{s,tot}}$  &  $n_{\mathrm{f}}$  &  $n_{\mathrm{LS}}$  \\ \hline
ARDC & $10^1$  &  $2.01 \times 10^{-11}$  &  $1.81 \times 10^{-2}$  &  $(24, 0)$  &  $(24, 24)$  &  $(48, 24)$  &  $11388$  &  $55$ \\ 
 & $10^2$  &  $6.3 \times 10^{-13}$  &  $2.42 \times 10^{-3}$  &  $(4, 2)$  &  $(2, 2)$  &  $(6, 4)$  &  $1830$  &  $5$\\ 
 & $10^3$  &  $\leq 3 \times 10^{-12}$  &  $4.70 \times 10^{-4}$  &  $(2, 2)$  &  $(0, 0)$  &  $(2, 2)$  &  $732$  &  $1$\\ 
 & $10^4$  &  $\leq 5 \times 10^{-11}$  &  $4.05 \times 10^{-4}$  &  $(2, 2)$  &  $(0, 0)$  &  $(2, 2)$  &  $732$  &  $1$\\ 
 & $10^5$  &  $\leq 3 \times 10^{-10}$  &  $3.91 \times 10^{-4}$  &  $(2, 2)$  &  $(0, 0)$  &  $(2, 2)$  &  $732$  &  $1$\\ 
 & $10^6$  &  $\leq 5 \times 10^{-9}$  &  $3.79 \times 10^{-4}$  &  $(2, 2)$  &  $(0, 0)$  &  $(2, 2)$  &  $732$  &  $1$\\ 
 & $10^7$  &  $\leq 4 \times 10^{-8}$  &  $3.59 \times 10^{-4}$  &  $(2, 2)$  &  $(0, 0)$  &  $(2, 2)$  &  $732$  &  $1$\\ 
\hline \hline
Kummer's phase function & $10^1$  &  $\leq 7 \times 10^{-14}$  &  $6.74 \times 10^{-3}$  &  $$  &  $$  &  $$  &  $75921$  &  $$ \\ 
 & $10^2$  &  $\leq 5 \times 10^{-13}$  &  $3.76 \times 10^{-3}$  &  $$  &  $$  &  $$  &  $44054$  &  $$\\ 
 & $10^3$  &  $\leq 3 \times 10^{-12}$  &  $2.92 \times 10^{-3}$  &  $$  &  $$  &  $$  &  $24740$  &  $$\\ 
 & $10^4$  &  $\leq 5 \times 10^{-11}$  &  $2.74 \times 10^{-3}$  &  $$  &  $$  &  $$  &  $20521$  &  $$\\ 
 & $10^5$  &  $\leq 3 \times 10^{-10}$  &  $3.19 \times 10^{-3}$  &  $$  &  $$  &  $$  &  $21808$  &  $$\\ 
 & $10^6$  &  $\leq 5 \times 10^{-9}$  &  $3.73 \times 10^{-3}$  &  $$  &  $$  &  $$  &  $23525$  &  $$\\ 
 & $10^7$  &  $\leq 4 \times 10^{-8}$  &  $3.18 \times 10^{-3}$  &  $$  &  $$  &  $$  &  $22468$  &  $$\\ 
\hline \hline
WKB marching & $10^1$  &  $1.95 \times 10^{-11}$  &  $2.39 \times 10^{-1}$  &  $0$  &  $0$  &  $1655$  &  $18293$  &  $$ \\ 
 & $10^2$  &  $2.78 \times 10^{-10}$  &  $3.10 \times 10^{0}$  &  $0$  &  $0$  &  $16181$  &  $179861$  &  $$\\ 
 & $10^3$  &  $\leq 3 \times 10^{-12}$  &  $1.24 \times 10^{0}$  &  $0$  &  $0$  &  $4718$  &  $84601$  &  $$\\ 
 & $10^4$  &  $1.15 \times 10^{-8}$  &  $5.12 \times 10^{-4}$  &  $0$  &  $0$  &  $4$  &  $33$  &  $$\\ 
 & $10^5$  &  $4.35 \times 10^{-8}$  &  $4.79 \times 10^{-4}$  &  $0$  &  $0$  &  $4$  &  $33$  &  $$\\ 
 & $10^6$  &  $1.14 \times 10^{-6}$  &  $4.89 \times 10^{-4}$  &  $0$  &  $0$  &  $4$  &  $33$  &  $$\\ 
 & $10^7$  &  $3.99 \times 10^{-6}$  &  $5.00 \times 10^{-4}$  &  $0$  &  $0$  &  $4$  &  $33$  &  $$\\ 
\hline \hline
\texttt{oscode} & $10^1$  &  $1.41 \times 10^{-12}$  &  $4.88 \times 10^{-1}$  &  $0$  &  $2716$  &  $2716$  &  $89408$  &  $$ \\ 
 & $10^2$  &  $1.44 \times 10^{-11}$  &  $3.79 \times 10^{0}$  &  $0$  &  $27270$  &  $27270$  &  $905938$  &  $$\\ 
 & $10^3$  &  $5.58 \times 10^{-11}$  &  $3.38 \times 10^{1}$  &  $12$  &  $240737$  &  $240749$  &  $7965804$  &  $$\\ 
 & $10^4$  &  $\leq 5 \times 10^{-11}$  &  $1.93 \times 10^{-2}$  &  $137$  &  $0$  &  $137$  &  $4620$  &  $$\\ 
 & $10^5$  &  $\leq 3 \times 10^{-10}$  &  $2.61 \times 10^{-2}$  &  $198$  &  $0$  &  $198$  &  $6226$  &  $$\\ 
 & $10^6$  &  $\leq 5 \times 10^{-9}$  &  $7.13 \times 10^{-2}$  &  $504$  &  $0$  &  $504$  &  $17094$  &  $$\\ 
 & $10^7$  &  $\leq 4 \times 10^{-8}$  &  $7.47 \times 10^{-2}$  &  $465$  &  $0$  &  $465$  &  $18128$  &  $$\\ 
\hline \hline
\end{tabular}
 }
    \caption{\label{bremer237tab} Accuracy, runtime and evaluation statistics of the algorithms
      considered  when applied to \cref{bremer237eq}.
      See \cref{solvercomp} for a summary of the solvers being compared and
      their settings, and an explanation of column headers.}
\end{table*}

\cref{bremer237-timing} provides a visual comparison of the runtime (left) and
achieved relative error (right) as a function of the frequency parameter
$\lambda$ of the solvers being tested. In addition to runtimes and relative
errors at $\veps = 10^{-12}$ reported in \cref{bremer237tab} (solid lines), it
also shows those at $\veps = 10^{-6}$ (dashed lines). It further shows the same
quantities for a 7,8th order Runge--Kutta method (denoted RK78) \cite{HWN,dop853} as a
representative of many standard high-order methods, whose runtime is expected
to grow as $\bigO(\lambda)$. The right-hand panel contains a gray shaded area
whose upper edge
serves as an upper bound on any data points (relative errors) that fall within it. This area is
defined by the error reported for Kummer's phase function method in
\cite[Table~1]{bremer2018}, which being our reference for computing errors
gives an upper bound. This is reflected in \cref{bremer237tab}: for any
measured errors lower than the reference we report only the upper bound.

\cref{bremer237tab} shows clearly the extremely quick convergence of the asymptotic method within ARDC
at sufficiently large frequencies: at $\lambda \geq 10^2$, the number of
function evaluations and steps become constant ($\approx 2$), causing the runtime to be
constant as well. Towards $\lambda = 10^7$, only $\approx 2$ Riccati iterations are required to achieve an estimated (local) relative error of
$10^{-12}$. At $\veps = 10^{-12}$, ARDC achieves the same accuracy as the
Kummer's phase function method, in the intermediate-to-large $\lambda$ regime, but is roughly 10 times faster.
 \cref{bremer237-timing} illustrates this nicely, but also highlights
that Kummer's phase function method becomes significantly faster (comparable 
to ARDC) at $\veps = 10^{-6}$, without the loss of accuracy, at all frequencies.
In other words, the convergence curve (achieved versus requested accuracy) in
the Kummer's phase function implementation is closer to a step function
than to a straight line. 

\texttt{oscode} and the WKB
marching method also achieve fast convergence at sufficiently large
frequencies, but due to the fact that neither are adaptive in the order of the
asymptotic expansion (\ie the number of terms in the expansion) this
happens later, at around $\lambda \geq 10^4$. This manifests itself as the
low-$\lambda$ ``hill'' in the left panel of \cref{bremer237-timing} at $\veps = 10^{-12}$,
but disappears at the higher tolerance $\veps = 10^{-6}$. 
Potentially due to the
stepsize-update algorithm used by \texttt{oscode}, its number of steps,
function evaluations, and therefore runtime, grows slowly rather than staying
constant as the frequency increases. As with Kummer's phase function method, at
sufficiently high frequencies ($\lambda \geq 10^4$), setting a tolerance of
$\varepsilon = 10^{-6}$ in \texttt{oscode} results in the same accuracy as
setting $\varepsilon = 10^{-12}$ but at fewer function evaluations and steps,
again due to the WKB expansion being highly accurate in this regime and to
\texttt{oscode} potentially overestimating its local error.

Finally, the WKB marching method needs significantly fewer function evaluations
(although achieves fewer digits of accuracy) at high frequencies because it
asks the user to provide high-order derivatives of the frequency $\om(t)$,
therefore it need not use further evaluations of $\om$ to compute said
derivatives numerically.

\begin{rmk}\label{dense}
Three solvers (ARDC, the
Kummer's phase function method, and \texttt{oscode}) offer a dense output
option, and do so by interpolating a slowly-varying phase function,
therefore they all have evaluation times of
$\mathcal{O}(10^{-6})$ \si{\s} per new target.
\end{rmk}

\begin{figure}[tb]
    \centering
    \includegraphics{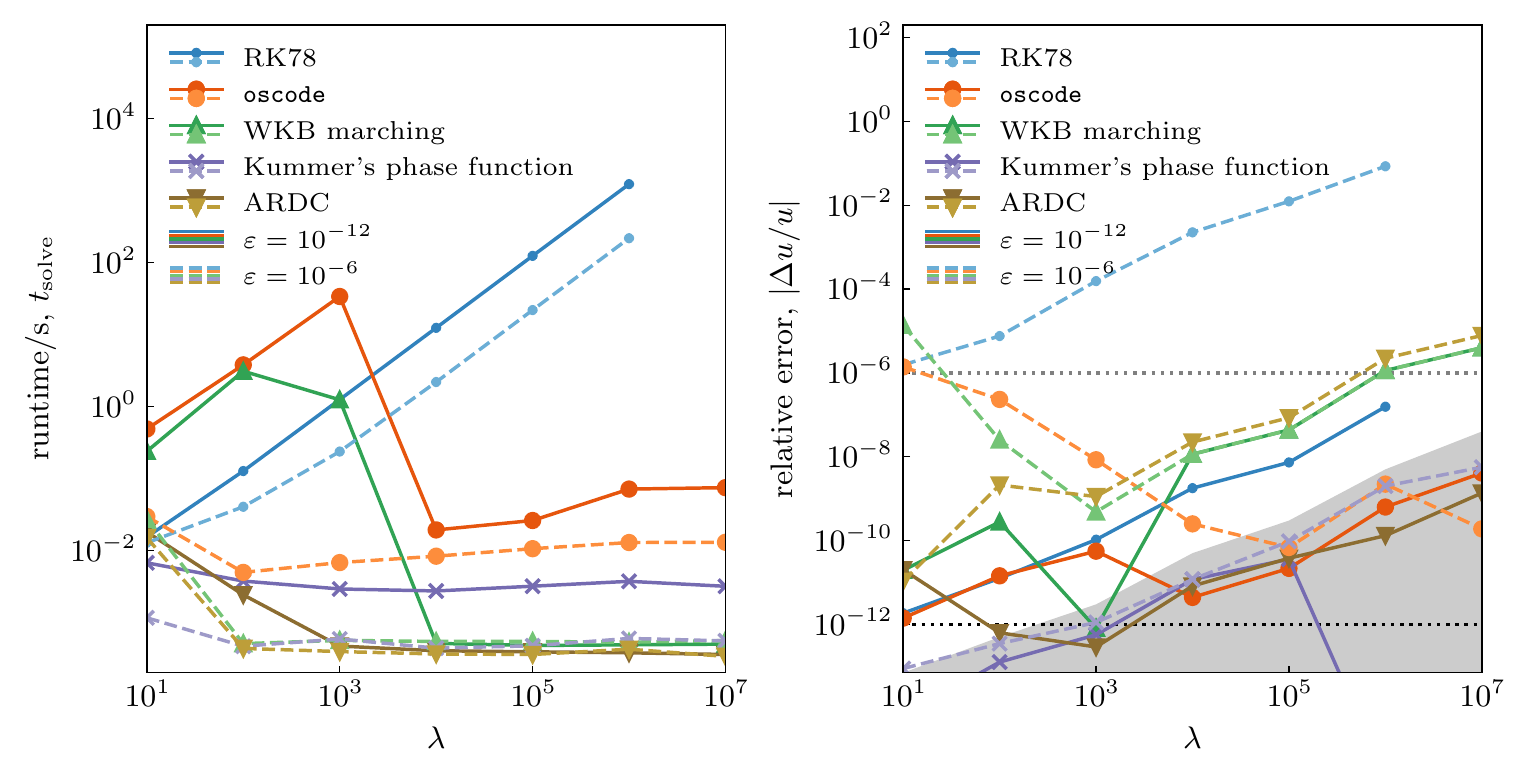}
    \caption{\label{bremer237-timing} Performance comparison of the proposed ARDC method against
    other state-of-the-art methods for \cref{bremer237eq}. The left panel
    shows the total runtime of the solvers listed in \cref{solvercomp} as the
    frequency parameter $\lambda$ is varied, for two different relative
    tolerance settings: runtimes with $\varepsilon = 10^{-12}$ are plotted with
    solid, and $\varepsilon = 10^{-6}$ with dashed lines. The right panel shows
    the corresponding relative error the solvers achieved at the end of the
    integration interval. The gray shaded region in this plot serves as an
    upper bound on any errors that fall within it, since the errors plotted
    here were computed using the Kummer's phase function method as a reference, which is
    only accurate up to the upper edge of the gray region. This is consistent with the theoretical error minimum \cref{mineps}.}
\end{figure}

\subsection{Special function evaluation: Legendre's equation}

Here we apply the proposed ARDC method to the rapid evaluation of
Legendre functions of high degree $\nu$.
Such functions solve Legendre's differential equation
\cite[Sec.~14.2]{dlmf}
\be\label{legendreode}
(1-t^2)u'' - 2tu' + \nu(\nu+1)u = 0, \quad -1 < t < 1.
\ee
We test a range of integer $\nu$ values from $10$ to $10^9$,
and check accuracy against
\texttt{scipy}'s
evaluation of the Legendre polynomial $P_\nu(t)$.
We solve on the solution interval $t \in [0.0, 0.9]$,
with the internal steps of ARDC as test points.\footnote{Since both $\om(t) = \sqrt{\nu(\nu+1)/(1-t^2)}$ and $\gamma(t) = -t/(1-t^2)$ have singularities at $t = \pm 1$,
we defer analysis of this limit to future work.
For now we merely state that empirically we are able to take
$t_1 \to 1$ as $\nu$ grows.}
Initial conditions at $t_0 = 0$ are chosen to match the value and derivative of
$P_\nu$, expressible in terms of gamma functions
\cite[Chapter~14.5]{dlmf} (where the log gamma function is used to avoid
roundoff at large $\nu$).

Error and timing results are shown in \cref{legendre-results}, for tolerances of
$\varepsilon = 10^{-12}$, $\varepsilon_h = 10^{-13}$, and the number of
Chebyshev nodes in Riccati steps set to $n = 16$.
They show a frequency-independent runtime,
show that the required tolerance (up to that allowed by the
condition number
of the problem) is achieved.

Keeping in mind \cref{dense}, ARDC thus enables
(at close to theoretical error bounds)
the evaluation of $P_\nu(t)$ with a startup time of around $10^{-3}$ s,
plus around $10^{-6}$ s per subsequent evaluation at each $t$ argument,
with both costs independent of $\nu$ for large $\nu$.
For comparison, \texttt{scipy}'s evaluation time grows with $\nu$,
taking around 1 s per evaluation at $\nu=10^9$.

\begin{table}
    \centering
    \renewcommand{\arraystretch}{1.2}
    \begin{tabular}{l c c c c c c c}
\hline \hline 
 $\nu$  &  $\max|\Delta u/u|$  &  $t_{\mathrm{solve}}$/\si{\s} &  $n_{\mathrm{s,osc}}$  &  $n_{\mathrm{s,slo}}$  &  $n_{\mathrm{s,tot}}$  &  $n_{\mathrm{f}}$  &  $n_{\mathrm{LS}}$  \\ \hline
$10^1$  &  $1.04 \times 10^{-11}$  &  $6.28 \times 10^{-3}$  &  $(0, 0)$  &  $(12, 12)$  &  $(12, 12)$  &  $3141$  &  $25$\\ 
$10^2$  &  $1.92 \times 10^{-10}$  &  $3.21 \times 10^{-2}$  &  $(35, 3)$  &  $(52, 52)$  &  $(87, 55)$  &  $14345$  &  $107$\\ 
$10^3$  &  $2.63 \times 10^{-12}$  &  $1.75 \times 10^{-3}$  &  $(10, 10)$  &  $(0, 0)$  &  $(10, 10)$  &  $1194$  &  $1$\\ 
$10^4$  &  $5.01 \times 10^{-12}$  &  $1.44 \times 10^{-3}$  &  $(10, 10)$  &  $(0, 0)$  &  $(10, 10)$  &  $1194$  &  $1$\\ 
$10^5$  &  $1.06 \times 10^{-10}$  &  $1.36 \times 10^{-3}$  &  $(10, 10)$  &  $(0, 0)$  &  $(10, 10)$  &  $1194$  &  $1$\\ 
$10^6$  &  $3.83 \times 10^{-10}$  &  $1.31 \times 10^{-3}$  &  $(10, 10)$  &  $(0, 0)$  &  $(10, 10)$  &  $1194$  &  $1$\\ 
$10^7$  &  $1.5 \times 10^{-9}$  &  $1.27 \times 10^{-3}$  &  $(10, 10)$  &  $(0, 0)$  &  $(10, 10)$  &  $1194$  &  $1$\\ 
$10^8$  &  $3.31 \times 10^{-8}$  &  $1.23 \times 10^{-3}$  &  $(10, 10)$  &  $(0, 0)$  &  $(10, 10)$  &  $1194$  &  $1$\\ 
$10^9$  &  $3.85 \times 10^{-7}$  &  $1.24 \times 10^{-3}$  &  $(10, 10)$  &  $(0, 0)$  &  $(10, 10)$  &  $1194$  &  $1$\\ 
\hline \hline 
\end{tabular}
     \caption{Accuracy, runtime and evaluation statistics of the proposed ARDC method when
      applied to Legendre functions via their ODE \cref{legendreode}.
      The degree is $\nu$; other column
      headers are identical to those in \cref{bremer237tab}
and are explained in the text. \label{legendre-results}}
\end{table}

\section{Conclusions \label{conclusions}}

We presented an efficient numerical algorithm for solving linear second order
ODEs of a single variable, posed as initial value problems (see \cref{ode}),
whose solution
may be highly oscillatory for some regions of the solution interval.
It switches adaptively between a new defect correction iteration for
the Riccati phase function (the logarithm of a solution in oscillatory regions), and
a conventional Chebyshev collocation solver (in smooth regions).
Both schemes are discretized to arbitrarily high order, and
have step-size adaptivity.
The resulting run time is independent of the maximum oscillation frequency.
It improves over prior high-frequency solvers
in several ways: it is efficient regardless of
whether the solution oscillates or not; it is arbitrarily high-order accurate;
it solves a more general problem by not imposing the restriction $\gamma =
0$ in \cref{ode}; and it finds a nonoscillatory local phase function
without needing a windowing method.

The proposed Riccati defect correction is asymptotic
with respect to large $\om$, but we believe not in general convergent.
However, we proved (\cref{TR}) that, for analytic coefficients with
$\om(t)$ sufficiently large and $\om'$ and $\g$ sufficiently small in a ball,
that (at the continuous level) it reduces the Riccati residual
geometrically by a factor $\bigO(\om)$ per iterate, \emph{temporarily}
up to a certain number of iterations which also scales as $\bigO(\om)$.
The constants are explicit in this theorem, and
predict rapid residual reduction even when the ball over which $\om$
is roughly constant is only a few oscillation periods across.
We leave it to future work to analyze the discretized iteration (\cref{discr}).

Numerical experiments in four ODEs demonstrate that the adaptivity
achieves close to the requested tolerance, up to theoretical
error bounds imposed by the condition number of the problem.
They also show the solver's high performance, allowing rapid solution times
relative to existing oscillatory solvers at comparable accuracies.

We now discuss some future directions.
Although we focused on initial value problems, much of the
proposed scheme would carry over to the discretization of
possibly-highly-oscillatory
homogeneous two-point boundary value problems.
Due to the linearity
of the ODE, any set of auxiliary conditions can be satisfied as a
post-processing step by solving the ODE with two sets of linearly independent
initial conditions and linearly combining the resulting solutions.
We note that a recent preprint addresses
oscillatory ODEs with a slowly-varying inhomogeneous forcing term \cite{serkh2022}.

Within the scope of this work, we only allowed $\om$ real. When $\om$ is
imaginary, any standard numerical solver will pick up the exponentially growing
solution due to rounding error, which will quickly overpower the evanescent
solution. However, in the case where physical decay conditions are known
(such as quantum eigenvalue problems),
it might be possible to use Riccati phase functions to impose these
conditions in a stable fashion.

Spectral methods are usually applied over the entire solution interval (\eg in
\cite{driscoll2008}) rather than within a time-stepping framework. When applied
locally and over a stepsize that can be changed adaptively, a degeneracy
appears between the number of nodes $n$ and the stepsize $h$, in that $n$ can
be increased and $h$ can be decreased to lower the local error.
Efficiency could be increased by
determining the points at which the algorithm is
supposed to switch between Riccati and Chebyshev steps \emph{a priori}, or at
least identifying large regions of the solution interval where the Chebyshev
spectral method will need to be used, and solving the ODE in these regions with a quasi-linear spectral scheme with large $n$ \cite{tref,fortunato2021}.

On the analysis side, connecting residual error back to the solution error
is a future goal; however, this appears quite technical
in the oscillatory case \cite{heitman2015}.

\section*{Acknowledgments}
We have benefited greatly from discussions with Jim Bremer, Charlie Epstein,
Manas Rachh, and Leslie Greengard.
The Flatiron Institute is a division of the Simons Foundation.

\printbibliography
\end{document}